\renewcommand\footnotetextcopyrightpermission[1]{} 
\newcommand{\IN}{\mathds{N}}
\newcommand{\IZ}{\mathds{Z}}
\newcommand{\IE}{\mathds{E}}
\newcommand{\IP}{\mathds{P}}
\newcommand{\IR}{\mathds{R}}
\newcommand{\Bc}{\textbf{c}}
\newcommand{\Br}{\textbf{r}}
\newcommand{\Bp}{\textbf{p}}
\newcommand{\CS}{\mathcal{S}}
\DeclareMathOperator{\Var}{Var}
\newcommand{\nop}[1]{}
\newcommand{\NAND}{\textsc{Nand}}
\newcommand{\figref}[1]{Fig.~\ref{#1}}
  \providecommand\BibTeX{{%
    \normalfont B\kern-0.5em{\scshape i\kern-0.25em b}\kern-0.8em\TeX}}}
\begin{document}



\title{Distributed Computation with Continual Population Growth}


\author{Da-Jung Cho}
\affiliation{%
  \institution{University of Kassel}
  \city{Kassel}
  \country{Germany}}
\email{dajung.cho@uni-kassel.de}

\author{Matthias F\"ugger}
\affiliation{%
  \institution{CNRS, LSV, ENS Paris-Saclay, Universit\'e Paris-Saclay, Inria}
  \city{Cachan}
  \country{France}}
\email{mfuegger@lsv.fr}

\author{Corbin Hopper}
\affiliation{%
  \institution{ENS Paris-Saclay}
  \city{Cachan}
  \country{France}}
\affiliation{%
  \institution{Universit\'e Paris-Saclay, CNRS}
  \city{Orsay}
  \country{France}}
\email{corbin.hopper@mail.mcgill.ca}

\author{Manish Kushwaha}
\affiliation{%
  \institution{Universit\'e Paris-Saclay, INRAE, AgroParisTech, Micalis Institute}
  \city{Jouy-en-Josas}
  \country{France}}
\email{manish.kushwaha@inrae.fr}

\author{Thomas Nowak}
\affiliation{%
  \institution{Universit\'e Paris-Saclay, CNRS}
  \city{Orsay}
  \country{France}}
\email{thomas.nowak@lri.fr}

\author{Quentin Soubeyran}
\affiliation{%
  \institution{\'Ecole polytechnique}
  \city{Palaiseau}
  \country{France}}
\affiliation{%
  \institution{Universit\'e Paris-Saclay, CNRS}
  \city{Orsay}
  \country{France}}
\email{quentin.soubeyran@polytechnique.edu}

\begin{abstract}
Computing with synthetically engineered bacteria is a vibrant and active field with numerous applications in bio-production, bio-sensing, and medicine. Motivated by the lack of robustness and by resource limitation inside single cells, distributed approaches with communication among bacteria have recently gained in interest. In this paper, we focus on the problem of population growth happening concurrently, and possibly interfering, with the desired bio-computation. Specifically, we present a fast protocol in systems with continuous population growth for the majority consensus problem and prove that it correctly identifies the initial majority among two inputs with high probability if the initial difference is $\Omega(\sqrt{n\log n})$ where~$n$ is the total initial population.
We also present a fast protocol that correctly computes the NAND of two inputs with high probability.
We demonstrate that combining the NAND gate protocol with the continuous-growth majority consensus protocol, using the latter as an amplifier, it is possible to implement circuits computing arbitrary Boolean functions.
\end{abstract}

\begin{CCSXML}
<ccs2012>
<concept>
<concept_id>10003752.10003809.10010172</concept_id>
<concept_desc>Theory of computation~Distributed algorithms</concept_desc>
<concept_significance>500</concept_significance>
</concept>
</ccs2012>
\end{CCSXML}

\ccsdesc[500]{Theory of computation~Distributed algorithms}

\keywords{microbiological circuits, majority consensus, birth-death processes}


\maketitle

\section{Introduction}

In the past few decades, synthetic biology has laid considerable focus on the re-programming of cells as computing machines. They have been engineered to sense a range of inputs (metabolites~\cite{slomovic2015synthetic}, light~\cite{tabor2009synthetic}, oxygen~\cite{anderson2006environmentally}, pH~\cite{schmidl2019rewiring}) and process them to produce desired outputs according to defined processing codes (primarily digital~\cite{moon2012genetic}, but occasionally analog~\cite{daniel2013synthetic}). Some potential applications of the cellular machines include production of metabolic compounds of interest~\cite{paddon2013high}, bio-remediation of toxic environments~\cite{tay2017synthetic}, sensing of disease bio-markers~\cite{slomovic2015synthetic}, and therapeutic intervention by targeted effector delivery~\cite{anderson2006environmentally}. Yet, the ability of single cells to reliably process multiple inputs is acutely constrained by their limited resources. 

Adding too many processes into one cell leads to resource-stress and eventually the code is lost due to mutation, a baseline error mechanism present in all living systems. This has, in part, encouraged the notion of distributing the computational tasks across multiple cells~\cite{regot2011distributed,tamsir2011robust}, to reduce resource-stress and improve robustness. The value of the idea is corroborated by the success of the division of labor seen in multi-cellular organisms that have naturally evolved from their unicellular ancestors~\cite{libby2014ratcheting,ratcliff2012experimental}. While task-distribution in cell populations solves some problems, it immediately leads to other challenges that must be tackled for the successful implementation of any complex distributed program. Some of these challenges include: the orthogonality/specificity of communication signals, the rate and bandwidth of communication channels, cellular growth and its effect on signal amplification or dissipation, and effect of cross-talk between different signals.

In this work we focus on amplification and Boolean function computation in distributed systems whose agents are duplicating bacteria.
A central problem in this setting is to maintain a consistent state of circuit values among the bacteria, a problem that has been studied in distributed computing for decades in different contexts \cite{Lynch96}.
Starting from a mathematical computing model, analysis of a system's behavior has led to correctness proofs and performance bounds of proposed solutions, also shedding light on how protocol parameters influence the quality of the outcome.
In distributed systems with biological agents, the cellular behavior is usually expressed in the language of chemical reaction networks~(CRNs).
A CRN is defined by a set of reactions, each consuming members of one or several species and producing members of others at a given rate.

The two most commonly used kinetics for CRNs are deterministic and stochastic approaches. The deterministic approach models the kinetics of a CRN as systems of ordinary differential equations (ODEs) with continuous real-valued concentrations of each species, whereas the stochastic approach models the CRN as a continuous-time Markov chain with discrete integer-valued counts of each species.  
While ODE modeling can capture important behavioral characteristics, in particular expected-value large-population limits, some phenomena can only be explained by stochastic-process kinetics.
In particular, ODE kinetics cannot elucidate the probability of certain population-level events occurring in a system of two competing species, e.g., the extinction of one species due to a series of random events.
The stochastic-process kinetics of CRNs are much more common in distributed computing, in particular in population protocols \cite{angluin08:dc}, where reactions are restricted to two reactants and two products with constant size populations, but also in computability results in more general CRNs~\cite{DBLP:journals/nc/SoloveichikCWB08}.

\paragraph{Consistent cell states by competition among cells.}
Competition among species naturally lends itself to solving consensus-type problems.
Angluin \emph{et al.} \cite{angluin08:dc} analyzed a population protocol with three states: $A$, $B$, and blank.
Encounters of opposing species $A$ and $B$ lead to one of them
  becoming blank, and blank species that encounter a non-blank species copy its state.
The population protocol by Angluin \emph{et al.} \cite{angluin2008fast} alternates phases of state duplication and cancellation, separated by a clock signal generated by a dedicated leader species.
These protocols, however, rely on constant size populations and the latter on a dedicated leader, rendering them impractical for implementations in bacterial cultures.

Birth-death processes track species counts within a population with ``birth'' and ``death'' events over time.
For each such population state there are transitions that move from one population state to the other with respect to ``birth'' and ``death'' events.   
Birth-death processes have been used to model competition, predation, or infection in evolutionary biology, ecology, genetics, and queueing theory~\cite{NKK06,Saaty61}. 

An early mention of problems requiring a stochastic analysis of two competing species is by
  Volterra~\cite{volterraleccons} and Feller~\cite{feller39grundlagen}
although only the growth of a single species is analyzed therein.
For an overview of single species birth-death Markov chains, see, e.g., \cite{bremaud99}.
Extensions for multiples species, with applications to genetic mutations, are found in
  the literature on competition and branching processes~\cite{reuter1961competition,billard1974competition,kendall1966branching}. For example, Ridler-Rowe \cite{ridler1978competition} considers a stochastic process between two competing species.
However, the process in that work differs from ours in that death reactions
  are \ch{$A$ + $B$ -> $A$} and \ch{$A$ + $B$ -> $B$}, leaving
  a winner after an encounter between two competing individuals.
The paper presents an approximation for long-term distributions and bounds the probability that starting from initial $A,B$ sizes, species $A$ goes extinct.
However, the analysis is for initial population sizes approaching infinity, only, and assumes an initial gap between species counts that is linear in the population size.
By contrast our analysis holds for finite population sizes $n$, and requires a gap of $\Omega(\sqrt{n \log n})$, only.
A complementary approach for the same asymmetric process proposed in \cite{gomez2012extinction} is to numerically solve
  a finite size cut-off of the infinite linear equation systems.

\paragraph{Computation in birth systems.}
In this work, we introduce and study \emph{protocols for birth systems}
  where all species inherently duplicate.
Such protocols are thus different from population protocols, which have population sizes that remain constant over the course of an execution.
Further, our protocols do not rely on exact species counts, they are not leader-based, and they require small and constant state space per cell, lending themselves readily for future biological implementation.

For simplicity we assume that all duplication reactions of our birth systems have the same rate.
We leave the question of natural selection due to differing growth rates to future work.
In particular, we study two protocols within birth systems.
\begin{itemize}
\item[(i)] We introduce the A-B protocol for two species $A$ and $B$
  and show that it solves majority consensus with high probability: If the initial difference between $A$ and $B$ sizes $\Delta$ grows weakly with the population size $n$ according to~$\Delta=\Omega(\sqrt{n \log n})$, then the protocol identifies the initial majority with high probability.
Since it amplifies the difference between the two species, we also refer to the A-B protocol as an \emph{amplifier}.
Further, we will show that the protocol reaches consensus in expected constant time.
The protocol's reactions are deceptively simple.
Besides the obligatory birth reactions~\ch{$A$ -> $2A$} and \ch{$B$ -> $2B$}, it comprises a single death reaction~\ch{$A$ + $B$ -> $\emptyset$}. 

\item[(ii)] We demonstrate how to implement the components of \emph{feed-forward Boolean circuits}.
Each Boolean gate in our implementation is a \NAND\ gate, followed by an amplifier.
Note that while we focus on the universal \NAND\ gate for the sake of a lighter notation, our construction and its analysis holds for any arbitrary two-input Boolean function.
The latter will be important for optimization and follow-up with biological implementations.
Signals between the \NAND\ gates
  are encoded using two species each, the difference of
  which determines whether a signal is a logical $0$, $1$, or neither.
A \NAND\ gate is a protocol that maps two input
  signals $X$ and $Y$ to an output signal~$Z$
  that is the logical \NAND\ of $X$ and $Y$.
\end{itemize}

While \NAND\ gates are used to implement the circuit's
  Boolean behavior, the successive amplifiers regenerate the gate's output signal by amplifying the difference between the two output signal species.
Repeated, successive invocation of the \NAND\ protocol followed by the amplifier protocol for time $O(\log n)$, where $n$ is the total initial population, can finally be used to compute the
  circuit's output values layer by layer.

\paragraph{Organization.} The rest of the paper is organized as follows:
In Section~\ref{sec:model}, we define the computational model. In Section~\ref{sec:majority_consensus}, we introduce and analyze our protocol for majority consensus.
In Section~\ref{sec:boolean_gate}, we define and analyze the \NAND\ gate protocol. In Section~\ref{sec:simulations}, we present simulations of the A-B protocol as well as a biologically plausible implementation of the \NAND\ gate with amplifiers. Finally, Section~\ref{sec:conclusion} concludes the paper by summarizing our results.

\section{Model}\label{sec:model}

We write $\IN = \{0,1,\dots\}$, $\IN^+ = \IN \setminus \{0\}$, and $\IR_0^+ = [0,\infty)$.
When analyzing our protocols, we employ the term ``with high probability'' relative to the total initial population.
That is, event~$E$ happens with high probability if there exists some $c>0$ such that $\IP(E) = 1-O\left(1/n^c\right)$, where~$n$ is the total initial population.

\subsection{Chemical Reaction Networks}

We use the standard stochastic kinetics for chemical reaction networks.
A reader familiar with the model can safely skip this subsection.

A \emph{chemical reaction network} is described by a set~$\CS$ of species and a set of reactions.
A \emph{reaction} is a triple~$(\Br, \Bp, \alpha)$ where $\Br, \Bp\in \mathbb{N}^{\CS}$ and $\alpha\in\IR_0^+$.
The species with positive count in $\Br$ are called the reaction's \emph{reactants} and this with positive count in~$\Bp$ are called its \emph{products}.
The parameter $\alpha$ is called the reaction's \emph{rate constant}.
A \emph{configuration} of a CRN is simply an element of $\IN^{\CS}$.
A reaction~$(\Br, \Bp, \alpha)$ is \emph{applicable} to configuration~$\Bc$ if $\Br(S) \leq \Bc(S)$ for all $S\in\CS$.

We will write \ch{$\Br$ ->[$\alpha$] $\Bp$} to denote a reaction $(\Br, \Bp, \alpha)$.
For instance, the reaction~$(\{A, B\},\{2B, C\}, \alpha)$ will simply be denoted 
\ch{$A+B$ ->[$\alpha$] $2B+C$}.
Here, we used the shorthand notations $\{A, B\}$ and $\{2B,C\}$ for functions $\CS\to\IN$.
For instance, the notation $\{2B,C\}$ represents the function $\Bp:\CS\to\IN$ defined by
$\Bp(B)=2$, $\Bp(C)=1$, and
$\Bp(S)=0$ for all other species $S \not\in \{B,C\}$.

The \emph{stochastic kinetics} of a CRN are a continuous-time Markov chain (see a textbook~\cite{bremaud99} for auxiliary definitions).
Given some volume $v\in\IR_0^+$, which we will normalize to $v=1$, the propensity of a reaction~$(\Br, \Bp, \alpha)$ in configuration~$\Bc$ is equal to
$\frac{\alpha}{v}
\prod_{S\in\CS} \binom{\Bc(S)}{\Br(S)}$, 
where $\binom{\Bc(S)}{\Br(S)}$ denotes the binomial coefficient of~$\Bc(S)$ and~$\Br(S)$.
The binomial coefficient is~$1$ if $\Br(S)=0$, i.e., if the species~$S$ is not a reactant of the reaction.
It is~$0$ if $\Br(S) > \Bc(S)$.
The propensity of a non-applicable reaction is thus~$0$.
For example, the propensity of reaction \ch{$A+B$ ->[$\alpha$] $2B+C$} in configuration~$\Bc$ is equal to $\frac{\alpha}{v}\cdot \Bc(A) \cdot \Bc(B)$.
The propensity of \ch{$A$ ->[$\gamma$] $2A$} is equal to $\frac{\gamma}{v}\cdot \Bc(A)$.
The new configuration after an applicable reaction is equal to 
$\Bc' = \Bc - \Br + \Bp$.

We will use the notation~$Q(x,y)$ for
the propensity of the transition from state~$x$ to state~$y$
in a continuous-time Markov chain.
To each continuous-time Markov chain corresponds a discrete-time Markov chain that only keeps track of the sequence of state changes, but not of their timing.
We will write~$P(x,y)$ for the transition probability from state~$x$ to state~$y$ in the discrete-time chain.
We have the formula
$P(x,y) = Q(x,y)/\sum_z Q(x,z)$.

\subsection{Birth Systems}

A \emph{protocol for a birth system}, or protocol, with input species $\mathcal{I}$ and
  output species $\mathcal{O}$, for finite, not necessarily disjoint, sets $\mathcal{I}$ and $\mathcal{O}$
  is a CRN specified as follows.
Its set of species $\mathcal{S}$ comprises input/output
  species $\mathcal{I} \cup \mathcal{O}$ and a finite set of internal species $\mathcal{L}$.
Further, the protocol defines the initial
  species counts $X_0$ for internal and output species $X \in \mathcal{L} \cup \mathcal{O}$ and a
  finite set of reactions $\mathcal{R}$ on the species in $\mathcal{S}$.
For each species $X \in \mathcal{S}$, there is a duplication reaction of the form
  $\ch{$X$ ->[$\gamma$] $2X$}$.
All duplication reactions have the same rate constant~$\gamma>0$.

Given a protocol and an initial species count for its inputs,
an execution of the protocol is given by the stochastic process of the CRN with species $\mathcal{S}$,
reactions $\mathcal{R}$, and respective initial species counts.

\section{Majority Consensus}
\label{sec:majority_consensus}
The A-B protocol is defined for two species, $A$ and $B$, both of which are inputs and outputs.
It contains, apart from the obligatory duplication reactions, the single reaction of~$A$ and~$B$ eliminating each other with rate constant $\delta>0$.
The complete list of reactions of the A-B protocol is thus:
\begin{equation*}
\begin{split}
\ch{$A$ ->[$\gamma$] $2A$}\quad\quad
\ch{$B$ ->[$\gamma$] $2B$}\quad\quad
\ch{$A+B$ ->[$\delta$] $\emptyset$}
\end{split}
\end{equation*}

We say that \emph{consensus} is reached if one of the two species becomes extinct.
If the initial population counts differ, we say that \emph{majority consensus} is reached if consensus is reached and the species that was initially in majority is not extinct.
If the initial counts of both species are equal, then majority consensus is reached when one species is extinct and the other is not.

We show that the A-B protocol reaches consensus in constant time and majority consensus with high probability.

\begin{thm}\label{thm:consensus}
For initial population~$n = A(0) + B(0)$ and initial gap~$\Delta = \lvert A(0)-B(0)\rvert$,
the A-B protocol reaches consensus in expected time  $O(1)$ and in time  $O(\log n)$ with high probability.
It reaches majority consensus with probability\/ 
$1 - e^{-\Omega(\Delta^2/n)}$.
\end{thm}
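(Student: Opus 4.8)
The plan is to exploit a structural feature of the three reactions: the death reaction \ch{$A + B$ -> $\emptyset$} removes one $A$ and one $B$ simultaneously, so it leaves the difference $D(t) = A(t) - B(t)$ unchanged; only the two duplication reactions move $D$, by $+1$ (at propensity $\gamma A$) or $-1$ (at propensity $\gamma B$). Consequently $D$ changes in unit steps, and the consensus winner is determined exactly by the sign of $D$ at the absorption time: if $A$ is the initial majority, so $D(0) = \Delta > 0$, then majority consensus fails only if the process ends with $A$ extinct, which forces $D < 0$ there and hence (unit steps) forces $D$ to pass through $0$. Thus
\[
\IP(\text{majority consensus fails}) \le \IP\bigl(\exists t:\ D(t) = 0\bigr),
\]
and it suffices to bound the probability that $D$, started at $\Delta$, ever reaches $0$.

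Second, I would turn the positive drift of $D$ into a martingale. The generator gives $\mathcal L D = \gamma(A-B) = \gamma D$, so the discounted difference $M(t) = e^{-\gamma t} D(t)$ is a martingale with $M(0) = \Delta$, and $D(t)=0 \iff M(t)=0$; thus $D$ reaching $0$ is precisely a downward deviation of $M$ from $\Delta$ to $0$. Here $M$ is a pure-jump martingale whose jump at a birth at time $t$ has size $e^{-\gamma t}\le 1$, with predictable quadratic variation $\langle M\rangle_t = \int_0^t \gamma S(u)\,e^{-2\gamma u}\,du$ where $S=A+B$. The crucial estimate is $\langle M\rangle = O(n)$: since $\mathcal L S = \gamma S - 2\delta AB \le \gamma S$, Grönwall gives $\IE[S(u)]\le n e^{\gamma u}$, and the exponential discount yields $\IE\langle M\rangle_\infty \le \gamma\int_0^\infty n e^{\gamma u}e^{-2\gamma u}\,du = n$; the same discount concentrates $\langle M\rangle$ on an $O(1)$ initial window, on which $S=O(n)$ with high probability (e.g.\ by pathwise domination of $S$ by a pure-birth process started from $n$). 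With jumps bounded by $1$ and $\langle M\rangle=O(n)$, Freedman's martingale concentration inequality gives $\IP(M \text{ reaches } 0)\le \exp\bigl(-\Omega(\Delta^2/n)\bigr)$, using $\Delta\le n$ to absorb the linear term in the denominator; this is exactly the claimed failure probability.

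For the timing statements I would argue by a collapse/absorption split. While both species are present and $S$ is large, the death propensity $\delta AB$ dominates the birth propensity $\gamma S$, so $S$ obeys the logistic-type drift $\mathcal L S = \gamma S - 2\delta AB \approx \gamma S - \tfrac{\delta}{2}S^2$, strongly negative for $S\gg 2\gamma/\delta$; comparison with the ODE $\dot s = -\tfrac{\delta}{2}s^2$ shows $S$ falls from $n$ to a constant level $O(\gamma/\delta)$ in expected time $O(1)$, and concentration of the order-$n$ death waiting times gives the same with high probability. Once $S=O(1)$ the process is a competition on configurations of constant size with a uniform $\Omega(1)$ chance of extinguishing one species within $O(1)$ time (excursions back to large $S$ being suppressed by the negative drift), so the residual absorption time is dominated by a geometric number of constant-length trials, giving expected time $O(1)$ and, via its exponential tail, time $O(\log n)$ with high probability.

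I expect the main obstacle to be the rigorous control of $\langle M\rangle$ together with the clean application of Freedman's inequality: $S(t)$ is itself random and eventually grows, so one must combine the pure-birth domination with the $e^{-2\gamma t}$ discount (and a high-probability cap on $\sup_t S$ up to consensus) to certify $\langle M\rangle=O(n)$ on the relevant event before invoking the bound. The structural reduction ``winner $=$ sign of a martingale that starts at $\Delta$ and has variation $O(n)$'' is what makes the target exponent $\Delta^2/n$ appear naturally; the timing analysis is more routine but relies on the two-phase drift argument above.
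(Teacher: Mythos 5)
Your reduction of the failure event to ``$D=A-B$ hits $0$'' (sound, since the death reaction leaves $D$ unchanged and births move it by $\pm1$) followed by the martingale $e^{-\gamma t}D(t)$ is a genuinely different route from the paper, which instead couples the discrete jump chain with two independent Yule processes, identifies the collision probability with $2\,I_{1/2}(A_0,B_0)$ via the beta-distributed limit of $X_k/(X_k+Y_k)$, and bounds that by a binomial-tail estimate. Your route produces the exponent $\Delta^2/n$ directly from Freedman's inequality, but the step you yourself flag as the main obstacle is a real gap, not a formality: Freedman bounds $\IP\bigl(\exists t: M(t)-M(0)\le-\Delta \wedge \langle M\rangle_t\le V\bigr)$, so the probability of $\{\langle M\rangle_\infty>V\}$ must be added to the final bound, and it must itself be $e^{-\Omega(\Delta^2/n)}$ --- for $\Delta=\Theta(n)$ that means exponentially small in $n$, so ``with high probability'' in the paper's polynomial sense is not enough, and a Markov or Chebyshev bound on $\langle M\rangle_\infty$ (whose expectation is indeed $\le n$) only gives a constant. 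The fix exists --- $e^{-\gamma u}\tilde S(u)$ for the dominating pure-birth process is a nonnegative martingale whose limit is a sum of $n$ i.i.d.\ exponentials, and a Doob/Chernoff argument yields $\IP\bigl(\sup_u e^{-\gamma u}\tilde S(u)>2n\bigr)=e^{-\Omega(n)}$, hence $\langle M\rangle_\infty\le 2n$ off an exponentially small event --- but it must be carried out at that strength.

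The timing argument has a more serious flaw. The drift of $S=A+B$ is $\gamma S-2\delta AB$, and $AB=\Omega(S^2)$ only when the two species are comparable; for unbalanced configurations (say $B\ll A$) the drift of $S$ is positive, $S$ does not collapse to $O(\gamma/\delta)$, your ODE comparison with $\dot s=-\tfrac{\delta}{2}s^2$ fails, and phase~2 may never be entered. The quantity that admits a uniform argument is $m=\min\{A,B\}$: its death rate $\delta AB\ge\delta m^2$ always dominates its birth rate (at most $\gamma m$ plus the max-species births, which do not increase $m$) once $m$ is large, and consensus is exactly extinction of $m$. This is what the paper does: it couples $\min\{A,B\}$ against a single-species birth--death chain with rates $\gamma M$ and $\delta M^2$, applies the classical closed-form expected extinction time (which is $O(1)$ here), and amplifies to $O(\log n)$ with high probability by $\Theta(\log n)$ geometric trials. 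Your two-phase argument would need to be restructured around $\min\{A,B\}$, or at least case-split on balanced versus unbalanced configurations, to close.
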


From Theorem~\ref{thm:consensus} we immediately obtain a bound on the initial gap sufficient for majority consensus with high probability.

\begin{cor}
For initial population~$n$ and initial gap~$\Delta$, if $\Delta = \Omega\left(\sqrt{n \log n}\right)$, then the A-B protocol reaches majority consensus with high probability.
\end{cor}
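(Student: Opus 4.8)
The plan is to obtain the corollary by directly substituting the hypothesis $\Delta = \Omega\left(\sqrt{n \log n}\right)$ into the probability bound furnished by Theorem~\ref{thm:consensus}. That theorem already guarantees that majority consensus is reached with probability $1 - e^{-\Omega(\Delta^2/n)}$, so the only remaining work is to verify that, under the stated growth condition on the gap, this exponent is large enough to make the failure probability polynomially small in~$n$, which is precisely the definition of high probability given in Section~\ref{sec:model}.

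First I would unfold the asymptotic hypothesis: $\Delta = \Omega\left(\sqrt{n \log n}\right)$ means there is a constant $c_1 > 0$ and a threshold $n_0$ such that $\Delta \geq c_1 \sqrt{n \log n}$ for all $n \geq n_0$. Squaring and dividing by~$n$ then yields $\Delta^2/n \geq c_1^2 \log n$, so that $\Delta^2/n = \Omega(\log n)$.

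Next I would translate this back into the failure probability. Writing the $\Omega$ from the theorem's bound explicitly as $\Omega(\Delta^2/n) \geq c_2 \Delta^2/n$ for some constant $c_2 > 0$, the two estimates chain together to give
\begin{equation*}
e^{-\Omega(\Delta^2/n)} \leq e^{-c_2 c_1^2 \log n} = n^{-c_2 c_1^2}.
\end{equation*}
Setting $c = c_2 c_1^2 > 0$, the failure probability is $O\!\left(1/n^c\right)$, so the success probability equals $1 - O\!\left(1/n^c\right)$, which is exactly high probability relative to the total initial population~$n$. This establishes the corollary.

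There is essentially no obstacle here: the entire content is carried by Theorem~\ref{thm:consensus}, and the corollary is a mechanical consequence of chaining the two asymptotic statements. The only point requiring minor care is the bookkeeping of the hidden constants---ensuring that the constant $c_1$ from the gap hypothesis and the constant $c_2$ inside the theorem's $\Omega$ combine into a single positive exponent~$c$---but this is routine.
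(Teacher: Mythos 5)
Your proposal is correct and matches the paper's intent exactly: the paper states the corollary as an immediate consequence of Theorem~\ref{thm:consensus}, and the substitution $\Delta^2/n = \Omega(\log n)$ turning $e^{-\Omega(\Delta^2/n)}$ into $O(1/n^c)$ is precisely the omitted one-line argument. The constant bookkeeping you spell out is the right (and only) point of care.
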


Without duplication reactions, it is obvious that the A-B protocol reaches consensus and that majority consensus is always reached if the two species have different initial population counts.
We are thus not only able to show that we can achieve majority consensus in spite of continual population growth via duplication reactions of all species, but also that a sub-linear gap in the initial population counts suffices.
The required initial gap of $\Omega(\sqrt{n\log n})$ matches that of the best protocols without obligatory duplications~\cite{angluin08:dc,condon19majority}.

We will prove Theorem~\ref{thm:consensus} in the following sections; first the time upper bound, then correctness with high probability.

\subsection{Markov-Chain Model}

The A-B protocol evolves as a continuous-time Markov chain with state space $S = \IN^2$.
Its state-transition rates are:
\begin{equation*}
\begin{split}
Q\big( (A,B) \,,\, (A+1,B) \big) & = \gamma A\\
Q\big( (A,B) \,,\, (A,B+1) \big) & = \gamma B\\
Q\big( (A,B) \,,\, (A-1,B-1) \big) & = \delta A B
\end{split}
\end{equation*}
Note that the death transition $(A,B) \to (A-1,B-1)$ has rate zero if $A=0$ or $B=0$.
Both axes $\{0\}\times\IN$ and $\IN\times\{0\}$ are absorbing, and so is the state  $(A,B)=(0,0)$.
This chain is regular, i.e., its sequence of transition times is unbounded with probability~$1$.
Indeed, as we will show,  the discrete-time chain reaches consensus with probability~$1$, from which time on the chain is equal to a linear pure-birth process, which is regular.

The corresponding discrete-time jump chain has the same state space $S=\IN^2$ and the state-transition probabilities
\begin{equation}
\begin{split}
P\big( (A,B) \,,\, (A+1,B) \big) & = \frac{\gamma A}{\gamma(A+B) + \delta AB}\\
P\big( (A,B) \,,\, (A,B+1) \big) & = \frac{\gamma B}{\gamma(A+B) + \delta AB}\\
P\big( (A,B) \,,\, (A-1,B-1) \big) & = \frac{\delta AB}{\gamma(A+B) + \delta AB}
\end{split}
\end{equation}
if $A>0$ or $B>0$.
The axes as well as state $(A,B)=(0,0)$ is absorbing, as in the continuous-time chain.

As a convention, we will write~$X(t)$ for the state of the continuous-time process~$X$ at time~$t$, and~$X_k$ for the state of the discrete-time jump process after~$k$ state transitions.
The time to reach consensus is the earliest time~$T$ such that $A(T)=0$ or $B(T)=0$.

\subsection{Time to Reach Consensus}

In this section we prove the first part of Theorem~\ref{thm:consensus}, i.e., the bounds on the time to reach consensus, both in expected time and with high probability.
For that, we will employ a coupling of the A-B protocol Markov chain with a single-species birth-death process.
We show that the A-B protocol reaches consensus when the single-species process reaches its extinction state and then bound this time in the single-species process.
\figref{fig:coupling:ineq:principle} visualizes the idea of the proof. 

We denote the single-species process by~$M(t)$.
It is a birth-death chain with state space $S=\IN$ and transition rates $Q(M,M+1) = \gamma M$ and $Q(M,M-1) = \delta M^2$.
State~$0$ is absorbing.
Note that the death rate~$\delta M^2$ depends quadratically on the current population~$M$, and not linearly like the birth rate~$\gamma M$.
The reason is that we want~$M(t)$ to bound the minimum of the populations~$A(t)$ and~$B(t)$ and that the death transition in the A-B protocol is quadratic in this minimum.

We will crucially use the fact that $\IP\bigr(M(t)=0\bigl) \leq \IP\bigr(A(t)=0 \vee B(t)=0\bigl)$ for all times~$t$.
This, together with a bound on the  time until $M(t)=0$, then gives a bound on the  time until consensus in the A-B protocol chain.

\begin{figure}[h]
\centering
\begin{tikzpicture}[spy using outlines={circle,magnification=4, size=4cm, connect spies}]
\coordinate (N) at (0,0);
\coordinate (T) at (8,0);
\coordinate (C) at (0,3);
\coordinate (Init) at (0,2);

\def\mdt{0.2}
\def\mdc{0.2}
\newcommand{\upafter}[1]{ -- ++($(#1 * \mdt,0)$) -- ++(0,\mdc)}
\newcommand{\downafter}[1]{ -- ++($(#1 * \mdt,0)$) -- ++(0,-\mdc)}
\newcommand{\straight}[1]{ -- ++($(#1 * \mdt,0)$)}

\draw[->] (N) -- (T) node[right] {$t$} node[pos=0,below] {$0$};
\draw[->] (N) -- (C) node[above] {};

\draw[green!50!black,thick] (Init)
  \upafter{2}
  \downafter{1}
  \upafter{1}\upafter{3}
  \downafter{1}\downafter{1}\downafter{3}\downafter{2}
  \downafter{2}\downafter{1}
  \upafter{3}
  \downafter{1}\downafter{2}\downafter{1}\downafter{3}
  \downafter{1}\downafter{2}\downafter{1}
  node[pos=1,below,xshift=-8pt] {$\min\{A(t),B(t)\}$};
  
\draw[orange!80!black,thick] (Init)
  \upafter{1}
  \upafter{0.5}
  \upafter{1.5}\upafter{2}
  \downafter{0.5}\downafter{2}\downafter{3}\downafter{2}
  \downafter{2}\upafter{3}
  \upafter{1}
  \downafter{3}\downafter{2}\downafter{1}\downafter{1}
  \downafter{1}\downafter{2}\downafter{3}\downafter{1}
  \downafter{2}\downafter{1}\downafter{1}
  node[pos=1,below,xshift=8pt] {$M(t)$};
  
\node[xshift=-30pt,yshift=7pt,orange!80!black,thick] at (Init) {$M(0) =$}; \node[xshift=-30pt,yshift=-7pt,green!50!black,thick] at (Init) {$\min\{A(0),B(0)\}$};

\draw[<->] (4.1,1.45) -- ++(0,0.7) node[above,yshift=5pt] {Lemma~\ref{lem:coupling:ineq}};

\draw[<->] (0,-0.55) -- ++(7.29,0) node[midway,below] {Lemma~\ref{lem:min:expected:time}};

\draw[fill,green!50!black] (Init) circle (0.7mm);
\draw[fill,green!50!black] (6.21,0) circle (0.7mm);
\draw[fill,orange!80!black] (7.29,0) circle (0.7mm);

\spy[black] on (6.1,1.1)
    in node [left] at (13,1.1);
\path[->,>=stealth',thick,blue!40!white] (9.5,0.85) edge[bend left=1.8cm] ++(0.45,0);
\path[->,>=stealth',thick,blue!40!white] (10,0.85) edge[bend left=1.5cm] ++(0.95,0);
\path[->,>=stealth',thick,orange!80!black] (11,0.85) edge[bend left=1.5cm] ++(0.8,0);
\path[->,>=stealth',dashed,thick] (9.5,1) edge[bend left=2cm] node[above,yshift=4pt] {Lemma~\ref{lem:markov:stuttering}} ++(2.3,0);

\end{tikzpicture}
\caption{Idea of the proof: Construction of a continuous-time coupling of the A-B protocol and the single species birth-death M chain.
Stuttering steps are mapped to effective steps (Lemma~\ref{lem:markov:stuttering}).
An execution of the coupling process fulfills the deterministic guarantee $\min\{A(t),B(t)\}\leq M(t)$ for all times $t\geq0$ (Lemma~\ref{lem:coupling:ineq}).
From the coupling it follows that
$\IP\bigr(M(t)=0\bigl) \leq \IP\bigr(A(t)=0 \vee B(t)=0\bigl)$ for the uncoupled processes (Lemma~\ref{lem:coupling:probs}).
The time until consensus then follows from the time until extinction in the M chain (Lemma~\ref{lem:min:expected:time}).
}
\label{fig:coupling:ineq:principle}
\end{figure}

\paragraph{Continuous-time coupling.}
The coupling is defined as follows.
For sequences~$(\xi_k)_{k\geq1}$ of i.i.d.\ (independent and identically distributed) uniform random variables in the unit interval $[0,1)$ and $(\eta_k)_{k\geq1}$ of i.i.d.\ exponential random variables with normalized rate~$1$, we define the coupled process $(A(t),B(t),M(t))$ as follows.
Initially, $M(0) = \min\{A(0),B(0)\}$.
For $k \geq 0$, the $(k+1)$\textsuperscript{th} transition happens after time $\eta_k/\Lambda(A_k,B_k,M_k)$ where
$\Lambda(A,B,M) = \max\{\lambda(A,B),\lambda(M)\}$
is the maximum of the sums of transition rates of the individual chains in states $(A,B)$ and~$M$, respectively, i.e.,~$\lambda(A,B)=\gamma(A+B)+\delta AB$
and
$\lambda(M) = \gamma M + \delta M^2$.
The new state $(A_{k+1},B_{k+1},M_{k+1})$ of the coupled chain is then determined by the following update rules.
The state $(0,0,0)$ is absorbing.
Otherwise, if $A_k \leq B_k$, then:
\begin{equation}\label{eq:min:coupling:ab}
(A_{k+1},B_{k+1})
=
\begin{cases}
(A_k+1,B_k) & \text{if } \xi_{k+1} \in \left[ 0 \,,\, \frac{\gamma A_k}{\Lambda(A_k,B_k,M_k)} \right)\\
(A_k,B_k+1) & \text{if } \xi_{k+1} \in \left[  \frac{\gamma A_k}{\Lambda(A_k,B_k,M_k)} \,,\, \frac{\gamma A_k+\gamma B_k}{\Lambda(A_k,B_k,M_k)} \right)\\
(A_k-1,B_k-1) & \text{if } \xi_{k+1} \in \left[ 1 - \frac{\delta A_k B_k}{\Lambda(A_k,B_k,M_k)} \,,\, 1 \right)\\
(A_k,B_k) & \text{otherwise}
\end{cases}
\end{equation}
If $A_k > B_k$ then the roles of~$A_k$ and~$B_k$ in~\eqref{eq:min:coupling:ab} are exchanged.
The update rule for~$M_{k+1}$ is:
\begin{equation}\label{eq:min:coupling:min}
M_{k+1}
=
\begin{cases}
M_k + 1 & \text{if } \xi_{k+1} \in \left[ 0 \,,\, \frac{\gamma M_k}{\Lambda(A_k,B_k,M_k)} \right)\\
M_k - 1 & \text{if } \xi_{k+1} \in \left[ 1 - \frac{\delta M_k^2}{\Lambda(A_k,B_k,M_k)} \,,\, 1 \right)\\
M_k & \text{otherwise}
\end{cases}
\end{equation}

\begin{figure}
\begin{tikzpicture}
\coordinate (AB0) at (0,0);
\coordinate (Ak) at (1.5,0);
\coordinate (Bk) at (3.5,0);
\coordinate (AB1) at (5,0);

\coordinate (M0) at (0,-1.7);
\coordinate (Mk) at (2.3,-1.7);
\coordinate (Mk2) at (3.1,-1.7);
\coordinate (M1) at (5,-1.7);

\node at ($ (AB0) + (2.5,1.4)$) {case $\lambda(A_k,B_k) > \lambda(M_k)$:};

\draw (AB0) -- (AB1) node [pos=0,xshift=-35pt] {A-B chain};
\draw ($ (AB0) + (0,-0.2) $) -- ++(0,0.4) node[above] {$0$};
\draw ($ (Ak) + (0,-0.2) $) -- ++(0,0.4);
\draw ($ (Bk) + (0,-0.2) $) -- ++(0,0.4);
\draw ($ (AB1) + (0,-0.2) $) -- ++(0,0.4) node[above] {$1$};

\draw [decorate,decoration={brace,amplitude=5pt,mirror,raise=5pt}] (AB0) -- (Ak) node [midway,yshift=-18pt] {$\gamma A_k/\Lambda$};

\draw [decorate,decoration={brace,amplitude=5pt,mirror,raise=5pt}] (Ak) -- (Bk) node [midway,yshift=-18pt] {$\gamma B_k/\Lambda$};

\draw [decorate,decoration={brace,amplitude=5pt,mirror,raise=5pt}] (Bk) -- (AB1) node [midway,yshift=-18pt] {$\delta A_k B_k/\Lambda$};

\path (AB0) -- (Ak) node[pos=0.5,yshift=10pt,green!50!black]
  {$A_k + 1$};
\path (Ak) -- (Bk) node[pos=0.5,yshift=10pt,green!50!black]
  {$B_k + 1$};
\path (Bk) -- (AB1) node[pos=0.5,yshift=20pt,green!50!black]
  {$A_k - 1$,};
\path (Bk) -- (AB1) node[pos=0.5,yshift=10pt,green!50!black]
  {$B_k - 1$};

\draw (M0) -- (M1) node [pos=0,xshift=-35pt] {M chain};
\draw ($ (M0) + (0,-0.2) $) -- ++(0,0.4) node[above] {$0$};
\draw ($ (Mk) + (0,-0.2) $) -- ++(0,0.4);
\draw ($ (Mk2) + (0,-0.2) $) -- ++(0,0.4);
\draw ($ (M1) + (0,-0.2) $) -- ++(0,0.4) node[above] {$1$};

\draw [decorate,decoration={brace,amplitude=5pt,mirror,raise=5pt}] (M0) -- (Mk) node [midway,yshift=-18pt] {$\gamma M_k/\Lambda$};

\draw [decorate,decoration={brace,amplitude=5pt,mirror,raise=5pt}] (Mk2) -- (M1) node [midway,yshift=-18pt] {$\delta M_k^2/\Lambda$};

\path (M0) -- (Mk) node[pos=0.5,yshift=10pt,orange!80!black]
  {$M_k + 1$};
\path (Mk2) -- (M1) node[pos=0.5,yshift=10pt,orange!80!black]
  {$M_k - 1$};
  
\draw[dotted] ($(Ak) + (0,-0.2)$) -- ($(Mk) + (0,0.2)$);

\draw[-,line width=1.5mm,blue!40!white] (Mk) -- (Mk2);

\end{tikzpicture}
\hspace{0.8cm}
\begin{tikzpicture}
\coordinate (AB0) at (0,0);
\coordinate (Ak) at (1.5,0);
\coordinate (Bk) at (3.2,0);
\coordinate (AkBk) at (3.6,0);
\coordinate (AB1) at (5,0);

\coordinate (M0) at (0,-1.7);
\coordinate (Mk) at (2.7,-1.7);
\coordinate (Mk2) at (2.7,-1.7);
\coordinate (M1) at (5,-1.7);

\node at ($ (AB0) + (2.5,1.4)$) {case $\lambda(A_k,B_k) < \lambda(M_k)$:};

\draw (AB0) -- (AB1);
\draw ($ (AB0) + (0,-0.2) $) -- ++(0,0.4) node[above] {$0$};
\draw ($ (Ak) + (0,-0.2) $) -- ++(0,0.4);
\draw ($ (Bk) + (0,-0.2) $) -- ++(0,0.4);
\draw ($ (AkBk) + (0,-0.2) $) -- ++(0,0.4);
\draw ($ (AB1) + (0,-0.2) $) -- ++(0,0.4) node[above] {$1$};

\draw [decorate,decoration={brace,amplitude=5pt,mirror,raise=5pt}] (AB0) -- (Ak) node [midway,yshift=-18pt] {$\gamma A_k/\Lambda$};

\draw [decorate,decoration={brace,amplitude=5pt,mirror,raise=5pt}] (Ak) -- (Bk) node [midway,yshift=-18pt,xshift=8pt] {$\gamma B_k/\Lambda$};

\draw [decorate,decoration={brace,amplitude=5pt,mirror,raise=5pt}] (AkBk) -- (AB1) node [midway,yshift=-18pt] {$\delta A_k B_k/\Lambda$};

\path (AB0) -- (Ak) node[pos=0.5,yshift=10pt,green!50!black]
  {$A_k + 1$};
\path (Ak) -- (Bk) node[pos=0.5,yshift=10pt,green!50!black]
  {$B_k + 1$};
\path (AkBk) -- (AB1) node[pos=0.5,yshift=20pt,green!50!black]
  {$A_k - 1$,};
\path (AkBk) -- (AB1) node[pos=0.5,yshift=10pt,green!50!black]
  {$B_k - 1$};

\draw (M0) -- (M1);
\draw ($ (M0) + (0,-0.2) $) -- ++(0,0.4) node[above] {$0$};
\draw ($ (Mk) + (0,-0.2) $) -- ++(0,0.4);
\draw ($ (Mk2) + (0,-0.2) $) -- ++(0,0.4);
\draw ($ (M1) + (0,-0.2) $) -- ++(0,0.4) node[above] {$1$};

\draw [decorate,decoration={brace,amplitude=5pt,mirror,raise=5pt}] (M0) -- (Mk) node [midway,yshift=-18pt] {$\gamma M_k/\Lambda$};

\draw [decorate,decoration={brace,amplitude=5pt,mirror,raise=5pt}] (Mk2) -- (M1) node [midway,yshift=-18pt] {$\delta M_k^2/\Lambda$};

\path (M0) -- (Mk) node[pos=0.5,yshift=10pt,orange!80!black]
  {$M_k + 1$};
\path (Mk2) -- (M1) node[pos=0.5,yshift=10pt,orange!80!black]
  {$M_k - 1$};
  
\draw[dotted] ($(Ak) + (0,-0.2)$) -- ($(Mk) + (0,0.2)$);  
  
\draw[-,line width=1.5mm,blue!40!white] (Bk) -- (AkBk);

\end{tikzpicture}
\caption{Continuous-time coupling of the A-B chain and the single-species birth-death M-chain, given that $A_k \leq B_k$, with $\Lambda = \Lambda(A_k,B_k,M_k)$. The intervals for the cases of $\xi_{k+1}$ and their effect on the A-B chain and the M-chain are shown in green and orange, respectively. Cases that lead to stuttering steps are shown in blue. The dotted relation between intervals is proven in Lemma~\ref{lem:coupling:ineq}.}
\label{fig:coupling_AB_M}
\end{figure}
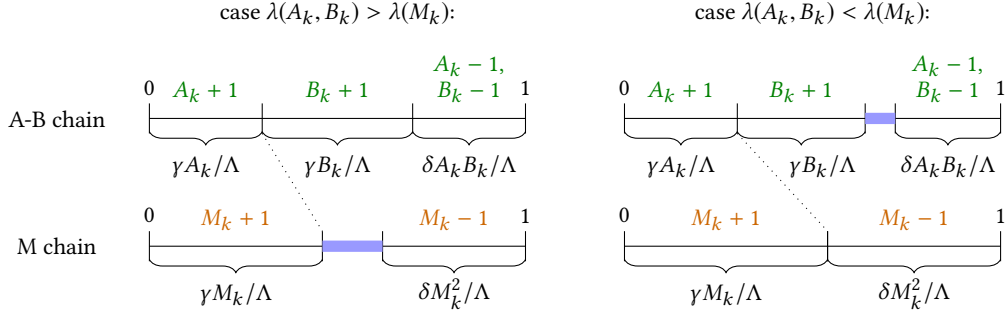

\paragraph{Analysis for time until consensus.}
Note, that in the coupling ``stuttering steps'' for~$(A_k,B_k)$ or~$M_k$ are possible in the definition of the coupled process,
  making the underlying discrete-time jump chains of, e.g., chain~$(A(t),B(t))$ and the A-B protocol, potentially differ.
Indeed, the event $(A_{k+1},B_{k+1}) = (A_k,B_k)$ is possible with positive probability if $\lambda(A_k,B_k) < \lambda(M_k)$, and $M_{k+1}=M_k$ has positive probability if $\lambda(M_k) < \lambda(A_k,B_k)$; see \figref{fig:coupling_AB_M}.
The following elementary Lemma~\ref{lem:markov:stuttering}, however, shows that the continuous-time chain~$(A(t),B(t))$ and the A-B protocol chain have identical transition rates, and are thus identically distributed.
The same holds true for the continuous-time chain~$M(t)$ and the birth-death M chain.
\opt{append}{Its proof is postponed to the appendix.}

\begin{lemrep}\label{lem:markov:stuttering}
Let~$T_1,T_2,\dots$ be a sequence of  i.i.d.\ exponential random variables with rate parameter~$\lambda$ and let~$k$ be an independent geometric random variable with success probability~$p$.
Then $T=T_1+\dots+T_k$
is exponentially distributed with rate parameter~$p\lambda$.
\end{lemrep}
\begin{proof}
By the law of total probability, for every $t\geq0$, we have
\begin{equation*}
\begin{split}
\IP(T \leq t)
& =
\sum_{k=0}^\infty p (1-p)^{k} \IP(T_1+\cdots+ T_{k+1} \leq t)
 =
\sum_{k=0}^\infty p (1-p)^{k} e^{-\lambda t} \sum_{i=k+1}^\infty \frac{1}{i!} (\lambda t)^i
\\ & =
e^{-\lambda t}
\sum_{i=0}^\infty \frac{1}{i!} (\lambda t)^i 
p \sum_{k=0}^{i-1} (1-p)^k
=
e^{-\lambda t}
\sum_{i=0}^\infty \frac{1}{i!} (\lambda t)^i (1 - (1-p)^i)
\\ & =
e^{-\lambda t} ( e^{\lambda t} - e^{(1-p)\lambda t})
=
1 - e^{-p\lambda t}
\enspace,
\end{split}
\end{equation*}
which is equal to the cumulative distribution function of an exponential random variable with parameter~$p\lambda$.
\end{proof}

By construction of the coupled process, the single-species birth-death process~$M(t)$ indeed dominates the minimum of the species population counts~$A(t)$ and~$B(t)$ in the following way:

\begin{lem}\label{lem:coupling:ineq}
In the coupled process, $\min\{A(t),B(t)\}\leq M(t)$ for all times $t\geq0$.
\end{lem}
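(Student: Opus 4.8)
The plan is to establish the inequality first for the embedded discrete-time chain, by induction on the step index~$k$, and then lift it to continuous time for free. Since the three coordinates $A(t)$, $B(t)$, $M(t)$ of the coupled process are piecewise constant and jump only at the common transition times of the coupled chain, it suffices to prove $m_k:=\min\{A_k,B_k\}\le M_k$ for all $k\ge0$: at any time $t$ the coordinates equal $A_k,B_k,M_k$, where $k$ is the number of transitions completed by time~$t$, whence $\min\{A(t),B(t)\}=m_k\le M_k=M(t)$. The base case is immediate from the initialization $M_0=\min\{A_0,B_0\}$.

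For the inductive step I would assume $m_k\le M_k$ and, using the symmetry of~\eqref{eq:min:coupling:ab}, take $A_k\le B_k$ so that $m_k=A_k\le M_k$. The argument is driven by the two interval-containment facts visible in \figref{fig:coupling_AB_M}. The first is a birth containment: because $A_k\le M_k$, the minority-birth interval $\bigl[0,\gamma A_k/\Lambda\bigr)$ of the A-B chain sits inside the birth interval $\bigl[0,\gamma M_k/\Lambda\bigr)$ of the M-chain, where $\Lambda=\Lambda(A_k,B_k,M_k)$. Since $m_k$ can increase (by exactly one) only when the minority species~$A$ is born, this containment shows that whenever $m_k$ increases the M-chain also performs a birth, giving $m_{k+1}=m_k+1\le M_k+1=M_{k+1}$. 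The same containment, combined with the within-chain disjointness of the M-birth interval from the M-death interval $\bigl[1-\delta M_k^2/\Lambda,1\bigr)$ (valid since $\Lambda\ge\lambda(M_k)=\gamma M_k+\delta M_k^2$), rules out the dangerous event that $m_k$ rises while $M_k$ falls.

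It remains to control steps that lower $M_k$, i.e.\ M-death with $M_{k+1}=M_k-1$; such a step can only threaten the invariant when $m_k=M_k$. This tight case is exactly where the main obstacle lies, and it is resolved by a death containment that forces the normalization to collapse. When $m_k=M_k$, i.e.\ $M_k=A_k\le B_k$, one computes $\lambda(A_k,B_k)-\lambda(M_k)=\gamma B_k+\delta A_k(B_k-A_k)\ge0$, so that $\Lambda=\lambda(A_k,B_k)$; with this value the elementary inequality $A_k^2\le A_kB_k$ yields $\bigl[1-\delta M_k^2/\Lambda,1\bigr)=\bigl[1-\delta A_k^2/\Lambda,1\bigr)\subseteq\bigl[1-\delta A_kB_k/\Lambda,1\bigr)$, i.e.\ the M-death interval is contained in the A-B death interval. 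Hence, in the tight case, any $\xi_{k+1}$ triggering an M-death simultaneously triggers an A-B death, so $m_{k+1}=m_k-1=M_k-1=M_{k+1}$. When instead $m_k<M_k$, an M-death is harmless because then $m_{k+1}\le m_k\le M_k-1=M_{k+1}$, while the disjointness above shows $m_k$ does not rise on such a step; and every move that neither lowers $M_k$ nor raises $m_k$ trivially preserves $m_{k+1}\le m_k\le M_k\le M_{k+1}$. Assembling these cases closes the induction. I expect the verification that $\Lambda=\lambda(A_k,B_k)$ in the equality case---which is what orients the death intervals correctly---to be the only subtle point; the birth side and the passage to continuous time are routine.
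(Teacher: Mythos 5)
Your proof is correct and follows essentially the same route as the paper's: induction on the discrete jump index, reducing without loss of generality to $A_k\le B_k$, and using the two interval containments — $A$-birth forces $M$-birth since $\gamma A_k\le\gamma M_k$, and in the tight case $A_k=M_k$ an $M$-death forces an $A$-$B$ death since $\delta M_k^2\le\delta A_kB_k$. The paper merely organizes the inductive step by the cases $A_k<M_k$ versus $A_k=M_k$ rather than by which event fires, and it does not need your computation that $\Lambda=\lambda(A_k,B_k)$ in the tight case, since both death intervals are right-anchored at $1$ under the common normalization.
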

\begin{proof}
Let~$K$ be the step number of the discrete-time coupled process such that $t_K \leq t < t_{K+1}$, where~$t_k$ is the time of the $k$\textsuperscript{th} step.
We show by induction that $\min\{A_k,B_k\} \leq M_k$  for all $k\in\IN$.
The inequality holds initially, for $k=0$, by definition of the coupled process.
Now assume that $\min\{A_k,B_k\}\leq M_k$.
Without loss of generality, by symmetry, assume that $A_k\leq B_k$, so that $A_k = \min\{A_k,B_k\} \leq M_k$.
Then $\gamma A_k \leq \gamma M_k$ and thus $A_{k+1} = A_k + 1$ implies $M_{k+1} = M_k + 1$ by the definition of the coupling in~\eqref{eq:min:coupling:ab} and~\eqref{eq:min:coupling:min}; see \figref{fig:coupling_AB_M}.
We distinguish the two cases $A_k < M_k$ and $A_k = M_k$.

If $A_k < M_k$, then the only way to have $A_{k+1} > M_{k+1}$ is to have $A_{k+1} = A_k + 1$ and $M_{k+1} = M_k - 1$. But this is impossible since $A_{k+1} = A_k + 1$ implies $M_{k+1} = M_k + 1$.

Otherwise, $A_k = M_k$. The case is shown in \figref{fig:coupling_AB_M:case2}.
We have, $\delta M_k^2 = \delta A_k^2 \leq \delta A_k B_k$.
Thus $M_{k+1} = M_k - 1$ implies $A_{k+1} = A_k - 1$ and $B_{k+1} = B_k - 1$.
Hence, combined with the above implication which remains true, we have $A_{k+1} \leq M_{k+1}$ in all possible cases for~$\xi_{k+1}$.
\end{proof}

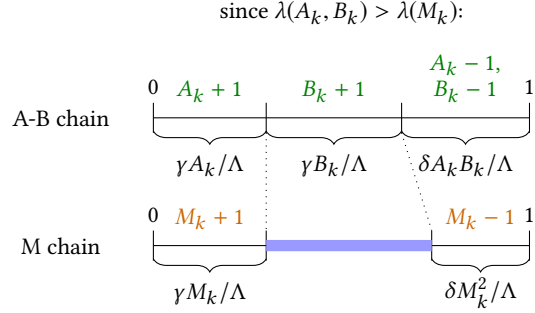
\begin{figure}[h]
\begin{tikzpicture}
\coordinate (AB0) at (0,0);
\coordinate (Ak) at (1.5,0);
\coordinate (Bk) at (3.3,0);
\coordinate (AB1) at (5,0);

\coordinate (M0) at (0,-1.7);
\coordinate (Mk) at (1.5,-1.7);
\coordinate (Mk2) at (3.7,-1.7);
\coordinate (M1) at (5,-1.7);

\node at ($ (AB0) + (2.5,1.4)$) {since $\lambda(A_k,B_k) > \lambda(M_k)$:};

\draw (AB0) -- (AB1) node [pos=0,xshift=-35pt] {A-B chain};
\draw ($ (AB0) + (0,-0.2) $) -- ++(0,0.4) node[above] {$0$};
\draw ($ (Ak) + (0,-0.2) $) -- ++(0,0.4);
\draw ($ (Bk) + (0,-0.2) $) -- ++(0,0.4);
\draw ($ (AB1) + (0,-0.2) $) -- ++(0,0.4) node[above] {$1$};

\draw [decorate,decoration={brace,amplitude=5pt,mirror,raise=5pt}] (AB0) -- (Ak) node [midway,yshift=-18pt] {$\gamma A_k/\Lambda$};

\draw [decorate,decoration={brace,amplitude=5pt,mirror,raise=5pt}] (Ak) -- (Bk) node [midway,yshift=-18pt] {$\gamma B_k/\Lambda$};

\draw [decorate,decoration={brace,amplitude=5pt,mirror,raise=5pt}] (Bk) -- (AB1) node [midway,yshift=-18pt] {$\delta A_k B_k/\Lambda$};

\path (AB0) -- (Ak) node[pos=0.5,yshift=10pt,green!50!black]
  {$A_k + 1$};
\path (Ak) -- (Bk) node[pos=0.5,yshift=10pt,green!50!black]
  {$B_k + 1$};
\path (Bk) -- (AB1) node[pos=0.5,yshift=20pt,green!50!black]
  {$A_k - 1$,};
\path (Bk) -- (AB1) node[pos=0.5,yshift=10pt,green!50!black]
  {$B_k - 1$};

\draw (M0) -- (M1) node [pos=0,xshift=-35pt] {M chain};
\draw ($ (M0) + (0,-0.2) $) -- ++(0,0.4) node[above] {$0$};
\draw ($ (Mk) + (0,-0.2) $) -- ++(0,0.4);
\draw ($ (Mk2) + (0,-0.2) $) -- ++(0,0.4);
\draw ($ (M1) + (0,-0.2) $) -- ++(0,0.4) node[above] {$1$};

\draw [decorate,decoration={brace,amplitude=5pt,mirror,raise=5pt}] (M0) -- (Mk) node [midway,yshift=-18pt] {$\gamma M_k/\Lambda$};

\draw [decorate,decoration={brace,amplitude=5pt,mirror,raise=5pt}] (Mk2) -- (M1) node [midway,yshift=-18pt] {$\delta M_k^2/\Lambda$};

\path (M0) -- (Mk) node[pos=0.5,yshift=10pt,orange!80!black]
  {$M_k + 1$};
\path (Mk2) -- (M1) node[pos=0.5,yshift=10pt,orange!80!black]
  {$M_k - 1$};
  
\draw[dotted] ($(Ak) + (0,-0.2)$) -- ($(Mk) + (0,0.2)$);
\draw[dotted] ($(Bk) + (0,-0.2)$) -- ($(Mk2) + (0,0.2)$);

\draw[-,line width=1.5mm,blue!40!white] (Mk) -- (Mk2);

\end{tikzpicture}
\caption{The case $A_k=M_k$ in the proof of Lemma~\ref{lem:coupling:ineq}, with $\Lambda = \Lambda(A_k,B_k,M_k)$.
The case's assumption implies that $\lambda(A_k,B_k) > \lambda(M_k)$.
The dotted relation between intervals is shown in the proof.}
\label{fig:coupling_AB_M:case2}
\end{figure}

Lemma~\ref{lem:coupling:ineq} allows to compare the probabilities of extinction in the single-species chain and of consensus in the A-B protocol chain:
\begin{lem}\label{lem:coupling:probs}
$\IP(M(t) = 0) \leq \IP(A(t) = 0 \vee B(t) = 0)$ for all times $t\geq 0$.
\end{lem}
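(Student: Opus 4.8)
The plan is to exploit the pathwise domination already established in Lemma~\ref{lem:coupling:ineq} together with the non-negativity of population counts. Since $A(t)$, $B(t)$, and $M(t)$ are all non-negative integers, the deterministic guarantee $\min\{A(t),B(t)\}\leq M(t)$ means that whenever $M(t)=0$ we necessarily have $\min\{A(t),B(t)\}=0$, i.e., $A(t)=0$ or $B(t)=0$. In other words, on the coupled probability space the event $\{M(t)=0\}$ is contained in the event $\{A(t)=0 \vee B(t)=0\}$, so monotonicity of measure yields $\IP(M(t)=0)\leq \IP(A(t)=0 \vee B(t)=0)$ for the coupled process directly, with no further computation.

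The one point that still requires justification is that these coupled probabilities coincide with the probabilities for the uncoupled A-B protocol chain and the uncoupled birth-death M chain that appear in the statement. Here I would invoke Lemma~\ref{lem:markov:stuttering}: the stuttering steps introduced by the coupling thin the exponential holding times by the geometric factor corresponding to the probability of an effective (non-stuttering) transition, and Lemma~\ref{lem:markov:stuttering} shows that such a geometric sum of i.i.d.\ exponentials is again exponential with exactly the original rate. Consequently the continuous-time marginal $(A(t),B(t))$ of the coupled process has the same transition rates as the A-B protocol chain, and the marginal $M(t)$ has the same transition rates as the birth-death M chain; being continuous-time Markov chains with identical rates and identical initial distributions, each marginal is therefore identically distributed to its uncoupled counterpart.

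Combining these two observations finishes the argument: the marginal law of $M(t)$ supplies the left-hand side, the marginal law of $(A(t),B(t))$ supplies the right-hand side, and the event inclusion valid on the coupled space transfers the inequality to the quantities in the statement. I expect the only genuinely delicate step to be the bookkeeping that the marginals are preserved under the stuttering-augmented coupling; the containment of events itself is immediate from non-negativity once Lemma~\ref{lem:coupling:ineq} is in hand.
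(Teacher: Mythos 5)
Your argument is exactly the paper's intended one: the paper leaves this lemma without a written proof precisely because it follows from the event inclusion $\{M(t)=0\}\subseteq\{A(t)=0\vee B(t)=0\}$ given by Lemma~\ref{lem:coupling:ineq}, combined with the fact (via Lemma~\ref{lem:markov:stuttering}) that the coupled marginals are identically distributed to the uncoupled A-B and M chains. Both the containment step and the marginal-preservation bookkeeping are handled correctly.
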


It thus suffices to prove bounds on the time until the population goes extinct in the single-species M chain.
For that, we leverage known results on birth-death processes, which are not applicable to the two-species A-B protocol chain.

\begin{lem}\label{lem:min:expected:time}
If\/ $T$ denotes the time until extinction in the single-species process~$M(t)$, then
$\IE\,T = O(1)$.
\end{lem}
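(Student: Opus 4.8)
The plan is to reduce the extinction time to a sum of one-step first-passage times and then exploit that the death rate $\delta M^2$ grows quadratically while the birth rate $\gamma M$ grows only linearly, so that the per-level passage times are summable even as the initial population grows. Since $M(t)$ is a birth-death chain whose transitions change the state by exactly $\pm 1$, starting from $M(0)=m$ the process must visit the levels $m-1, m-2, \dots, 1, 0$ in this order. Writing $u_\ell$ for the expected first-passage time from level $\ell$ to level $\ell-1$, the strong Markov property gives
\[
\IE\,T = \sum_{\ell=1}^{m} u_\ell,
\]
so it suffices to bound $\sum_{\ell\geq 1} u_\ell$ by a constant independent of $m$ (hence of $n$, as $M(0)\leq n$).

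Next I would set up the recursion for $u_\ell$. Conditioning on the first transition out of state~$\ell$, which goes down with rate $\delta\ell^2$ and up with rate $\gamma\ell$, and using that an upward step to $\ell+1$ must be undone before $\ell-1$ is reached, one obtains $u_\ell = \tfrac{1}{\delta\ell^2} + \tfrac{\gamma}{\delta\ell}\,u_{\ell+1}$. Unfolding this backward recursion yields the explicit convergent series
\[
u_m = \sum_{k \geq m} \frac{1}{\delta k^2}\left(\frac{\gamma}{\delta}\right)^{k-m}\frac{(m-1)!}{(k-1)!}\enspace.
\]
Equivalently, one can avoid solving the recursion by exhibiting $c/\ell^2$ as a supersolution and invoking minimality of the expected-passage-time solution.

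Then I would bound the series. Using $(m-1)!/(k-1)! \leq m^{-(k-m)}$ and $1/k^2 \leq 1/m^2$ for $k \geq m$, the series is dominated by a geometric one, giving $u_m \leq \tfrac{1}{\delta m^2}\bigl(1-\tfrac{\gamma}{\delta m}\bigr)^{-1} \leq \tfrac{2}{\delta m^2}$ for all $m \geq 2\gamma/\delta$. The finitely many remaining terms $u_\ell$ with $\ell < 2\gamma/\delta$ are each finite constants. Summing,
\[
\IE\,T = \sum_{\ell=1}^{m} u_\ell \leq \sum_{\ell < 2\gamma/\delta} u_\ell + \sum_{\ell \geq 2\gamma/\delta}\frac{2}{\delta\ell^2} = O(1)\enspace,
\]
uniformly in $m$, which is the claim.

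The main obstacle I anticipate is the rigorous justification that the $u_\ell$ are finite and satisfy the recursion and series as equalities, i.e.\ that the chain is absorbed at~$0$ almost surely with finite mean passage times. I would handle this via the standard birth-death criterion: the ratios $d_\ell/b_\ell = \delta\ell/\gamma$ make $\sum_k \prod_{i=1}^k (d_i/b_i)$ diverge, which guarantees almost-sure absorption, and then either truncate the state space and pass to a monotone limit or verify directly that the displayed series is the minimal nonnegative solution. The conceptually important point, bought by the quadratic death rate, is that $u_m$ decays like $1/m^2$, so that the total time stays bounded even as the initial population, and hence the number of levels to descend, grows without bound.
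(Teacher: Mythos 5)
Your proof is correct and in substance the same as the paper's: the paper cites the Karlin--Taylor formula $\IE\,T=\sum_{j=1}^{M}\sum_{k=j-1}^{\infty}\frac{\alpha(j)\cdots\alpha(k)}{\beta(j)\cdots\beta(k)}\cdot\frac{1}{\beta(k+1)}$, which is exactly the unfolded level-by-level first-passage series you derive from the recursion $u_\ell=\tfrac{1}{\delta\ell^2}+\tfrac{\gamma}{\delta\ell}u_{\ell+1}$, and both arguments reduce to showing $u_\ell=O(1/\ell^2)$ so that the sum over levels converges. The only cosmetic difference is in bounding the inner sum: the paper bounds it uniformly by $e^{\gamma/\delta}/(\delta j^2)$ via $k!/j!\ge(k-j)!$, while you use a geometric bound for $\ell\ge 2\gamma/\delta$ and treat the finitely many small levels separately.
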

\begin{proof}
The birth rate in state $M(t) = i$
is equal to $\alpha(i) = i\gamma$ and the death rate is equal to $\beta(i) = i^2 \delta$.
From known general results on birth-death process \cite[p.~149]{karlin75} we obtain,
when starting from initial population $M(0) = M$, that
\begin{equation}
\begin{split}
\IE\,T
& =
\sum_{j=1}^M \sum_{k=j-1}^\infty \frac{\alpha(j)\cdots \alpha(k)}{\beta(j)\cdots \beta(k)}\cdot \frac{1}{\beta(k+1)}
 =
\sum_{j=1}^M \sum_{k=j-1}^\infty \frac{\gamma^{k-j+1}}{\delta^{k-j+1}k!/(j-1)!}\cdot\frac{1}{(k+1)^2\delta}
\end{split}
\end{equation}
Setting $\alpha = \gamma/\delta$, we have
\begin{equation}\label{eq:expected:time:bound}
\begin{split}
\IE\,T
& =
\frac{1}{\delta}
\sum_{j=1}^M \sum_{k=j-1}^\infty
\alpha^{k-j+1} 
\frac{(j-1)!}{(k+1)!(k+1)}
 =
\frac{1}{\delta}
\sum_{j=1}^M
\frac{(j-1)!}{\alpha^{j}}
\sum_{k=j}^\infty
\frac{\alpha^{k}}{k!k}
\\ & =
\frac{1}{\delta}
\sum_{j=1}^M
\frac{(j-1)!}{\alpha^{j}} \cdot
\frac{\alpha^{j}}{j!j}\sum_{k=j}^\infty
\frac{\alpha^{k-j}}{k!/j! \cdot k/j}
\leq
\frac{1}{\delta}
\sum_{j=1}^M
\frac{(j-1)!}{\alpha^{j}} \cdot
\frac{\alpha^{j}}{j!j}\sum_{k=j}^\infty
\frac{\alpha^{k-j}}{(k-j)!}
\end{split}
\end{equation}
since for $k \ge j \ge 1$, it is $k!/j! \ge (k-j)!$ and $k/j \ge 1$.
Thus,
\begin{equation}
\begin{split}
\IE\,T & \leq
\frac{1}{\delta}
\sum_{j=1}^M
\frac{(j-1)!}{\alpha^{j}} \cdot
\frac{\alpha^{j}}{j!j} \cdot
e^{\alpha}
 = 
\frac{e^{\alpha}}{\delta}
\sum_{j=1}^M \frac{1}{j^2}
 \leq \frac{e^{\alpha}\pi^2}{6\delta} = 
O(1)
\enspace.
\end{split}
\end{equation}
This concludes the proof.
\end{proof}

Denoting with $T_{AB}$ the earliest time $t$ such that $A(t)=0$ or $B(t) = 0$, and with $T_M$ the earliest time $t$ such that $M(t) = 0$, Lemma~\ref{lem:coupling:probs} is equivalent to
$\IP(T_M \leq t ) \leq \IP(T_{AB} \leq t)$, which, in turn, is equivalent to
$\IP(T_M > t )  \geq \IP(T_{AB} > t)$.
Using the formula $\IE\,T = \int_0^\infty \IP(T > t)\, dt$, we further have
\begin{equation}
\IE(T_M) = \int_0^\infty \IP(T_M > t)\, dt \geq \int_0^\infty \IP(T_{AB} > t)\, dt = \IE(T_{AB})\enspace.
\end{equation}
Combining this with Lemma~\ref{lem:min:expected:time}, shows that the expected time until consensus in the A-B protocol is also~$O(1)$.
For the high-probability result in the first part of Theorem~\ref{thm:consensus}, we simply make $\Theta(\log n)$ consecutive tries to achieve extinction in an interval of constant time:

\begin{lem}\label{lem:min:whp:time}
If~$T$ denotes the time until extinction in the singles-species process~$M(t)$, then there exists a constant~$C$ such that $\IP(T \leq C\log_2 n) = 1 - O(1/n)$.
\end{lem}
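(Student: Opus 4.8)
The plan is to combine the uniform expected-time bound from Lemma~\ref{lem:min:expected:time} with a restart (window) argument. The crucial observation is that the bound $\IE\,T \le \frac{e^{\alpha}\pi^2}{6\delta}$ derived in Lemma~\ref{lem:min:expected:time} holds \emph{uniformly} over all initial populations $M(0)=M$, since $\sum_{j=1}^{M} j^{-2} \le \pi^2/6$ for every $M$. Write $\mu = \frac{e^{\alpha}\pi^2}{6\delta}$ for this state-independent constant. This uniformity is the heart of the argument: it reflects that the quadratic death rate $\delta M^2$ overwhelms the linear birth rate $\gamma M$, so that however large the population has grown, it is still expected to go extinct within constant time.

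First I would apply Markov's inequality to convert the expected-time bound into a constant lower bound on the per-window extinction probability. Starting from any state $m>0$, Markov's inequality gives $\IP\bigl(T > 2\mu \mid M(0)=m\bigr) \le \frac{\mu}{2\mu} = \frac12$; equivalently, extinction occurs within any interval of length $2\mu$ with probability at least $\frac12$, irrespective of the starting state~$m$.

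Next I would chain these windows together using the Markov property. Partition $[0,\infty)$ into consecutive windows of length $2\mu$. Conditioned on $M$ not yet being extinct at time $2\mu i$, the state $M(2\mu i)=m$ is some positive integer, and by the Markov property the process thereafter is a fresh copy of the birth-death chain started from~$m$. The previous paragraph then bounds the conditional probability of surviving the $(i+1)$\textsuperscript{th} window by $\frac12$, uniformly in~$m$. Iterating over $k$ windows yields $\IP\bigl(T > 2\mu k\bigr) \le 2^{-k}$ for every $k\in\IN$.

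Finally I would choose the number of windows. Taking $k = \lceil \log_2 n\rceil$ gives $\IP\bigl(T > 2\mu\lceil\log_2 n\rceil\bigr) \le 2^{-\lceil\log_2 n\rceil} \le 1/n$, and $2\mu\lceil\log_2 n\rceil \le C\log_2 n$ for a suitable constant~$C$ and all sufficiently large~$n$. Hence $\IP(T \le C\log_2 n) = 1-O(1/n)$, as claimed. The only delicate point is the uniformity of the per-window bound; once the state-independence of the constant in Lemma~\ref{lem:min:expected:time} is recognized, the restart argument is entirely routine.
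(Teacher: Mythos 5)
Your proof is correct and follows essentially the same route as the paper: Markov's inequality applied to the constant expected-time bound of Lemma~\ref{lem:min:expected:time} yields a per-window failure probability of at most $1/2$, and chaining $\Theta(\log_2 n)$ such windows via the Markov property gives the $1/n$ bound. Your explicit observation that the expected-time bound is uniform over all starting states (because $\sum_{j=1}^{M} j^{-2} \leq \pi^2/6$ for every $M$) is a point the paper's proof uses implicitly, and spelling it out is a welcome clarification rather than a deviation.
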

\begin{proof}
Let~$C_1$ be the~$O(1)$ constant from Lemma~\ref{lem:min:expected:time} and set $C=\max\{2C_1,2\}$.
Then, by Markov's inequality, we have 
$\IP(T > C) \leq C_1/C = 1/2$.
Thus, the probability of the event $T > C\log_2 n$ is dominated by the probability of failing $\log_2 n$ consecutive tries with a Bernoulli random variable with parameter $p = 1/2$.
But this probability is $2^{-\log_2 n} = 1/n$.
\end{proof}

A simple combination of Lemmas~\ref{lem:coupling:probs} and~\ref{lem:min:whp:time} completes the proof of the first part of Theorem~\ref{thm:consensus}.

\subsection{Probability of Reaching Majority Consensus}

We now turn to the proof of the second part of Theorem~\ref{thm:consensus}, i.e., the bound on the probability to achieve majority consensus.
We use a coupling of the A-B protocol chain with a different process than for the time bound.
Namely we couple it with two parallel independent Yule processes.
A Yule process, also known as a pure birth process, has this single state-transition rule $X\to X+1$ with linear transition rate~$\gamma X$.
Since we already showed the upper bound on the time until consensus, it suffices to look at the discrete-time jump process.
In particular, the coupling we define is discrete-time.

\paragraph{Discrete-time coupling.}
For an i.i.d.\ sequence $(\xi_k)_{k\geq 1}$ of uniformly distributed random variables in the unit interval $[0,1)$, we
define the coupled process $(A_k,B_k,X_k,Y_k)$ by
$A_0=X_0$, $B_0=Y_0$, and
\begin{equation}\label{eq:value:coupling}
\begin{split}
(A_{k+1},B_{k+1})
& =
\begin{cases}
(A_k-1,B_k-1) & \text{if } \xi_{k+1} \in \left[0 \,,\, \frac{\delta A_k B_k}{\gamma(A_k+B_k)+\delta A_k B_k} \right)\\
(A_k+1,B_k) & \text{if } \xi_{k+1} \in \left[\frac{\delta A_k B_k}{\gamma(A_k+B_k)+\delta A_k B_k}\,,\, 1 - \frac{\gamma B_k}{\gamma(A_k+B_k)+\delta A_k B_k} \right)\\
(A_k,B_k+1) & \text{if } \xi_{k+1} \in \left[1 - \frac{\gamma B_k}{\gamma(A_k+B_k)+\delta A_k B_k}\,,\,1 \right)
\end{cases}
\\
(X_{k+1},Y_{k+1})
& =
\begin{cases}
(X_k,Y_k) & \text{if } \xi_{k+1} \in \left[0 \,,\, \frac{\delta A_k B_k}{\gamma(A_k+B_k)+\delta A_k B_k} \right)\\
(X_k+1,Y_k) & \text{if } \xi_{k+1} \in \left[\frac{\delta A_k B_k}{\gamma(A_k+B_k)+\delta A_k B_k}\,,\, 1 - \frac{\gamma (A_k+B_k)}{\gamma(A_k+B_k)+\delta A_k B_k} \cdot \frac{Y_k}{X_k+Y_k} \right)\\
(X_k,Y_k+1) & \text{if } \xi_{k+1} \in \left[1 - \frac{\gamma (A_k+B_k)}{\gamma(A_k+B_k)+\delta A_k B_k} \cdot \frac{Y_k}{X_k+Y_k} \,,\,1 \right)
\end{cases}
\end{split}
\end{equation}
if $\max\{A_k,B_k\}>0$ and $\max\{X_k,Y_k\}>0$.
Otherwise the process remains constant.
\figref{fig:coupling_AB_XY} visualizes the construction.

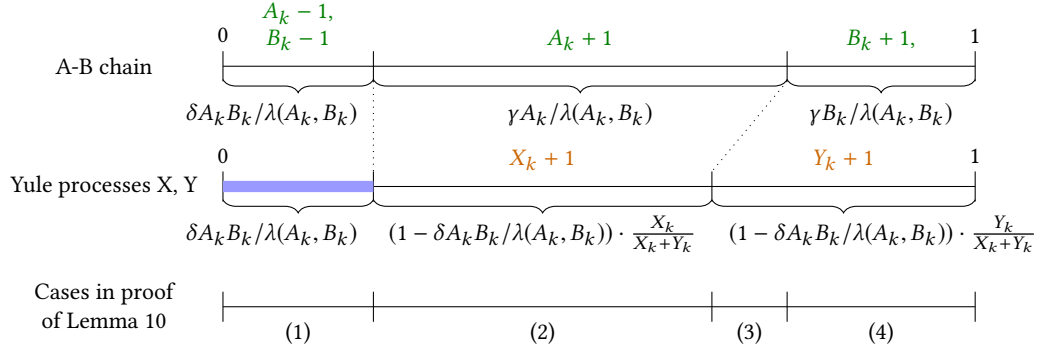
\begin{figure}
\begin{tikzpicture}
\coordinate (AB0) at (0,0);
\coordinate (Ak) at (2,0);
\coordinate (Bk) at (7.5,0);
\coordinate (AB1) at (10,0);

\coordinate (XY0) at (0,-1.6);
\coordinate (Xk) at (2,-1.6);
\coordinate (Yk) at (6.5,-1.6);
\coordinate (XY1) at (10,-1.6);

\coordinate (C0) at (0,-3.2);
\coordinate (C1) at (2,-3.2);
\coordinate (C2) at (6.5,-3.2);
\coordinate (C3) at (7.5,-3.2);
\coordinate (C4) at (10,-3.2);

\draw (AB0) -- (AB1) node [pos=0,xshift=-45pt] {A-B chain};
\draw ($ (AB0) + (0,-0.2) $) -- ++(0,0.4) node[above] {$0$};
\draw ($ (Ak) + (0,-0.2) $) -- ++(0,0.4);
\draw ($ (Bk) + (0,-0.2) $) -- ++(0,0.4);
\draw ($ (AB1) + (0,-0.2) $) -- ++(0,0.4) node[above] {$1$};

\draw [decorate,decoration={brace,amplitude=5pt,mirror,raise=5pt}] (AB0) -- (Ak) node [midway,yshift=-18pt,xshift=-10pt] {$\delta A_k B_k/\lambda(A_k,B_k)$};

\draw [decorate,decoration={brace,amplitude=5pt,mirror,raise=5pt}] (Ak) -- (Bk) node [midway,yshift=-18pt] {$\gamma A_k/\lambda(A_k,B_k)$};

\draw [decorate,decoration={brace,amplitude=5pt,mirror,raise=5pt}] (Bk) -- (AB1) node [midway,yshift=-18pt] {$\gamma B_k/\lambda(A_k,B_k)$};

\path (AB0) -- (Ak) node[pos=0.5,yshift=20pt,green!50!black]
  {$A_k - 1$,};
\path (AB0) -- (Ak) node[pos=0.5,yshift=10pt,green!50!black]
  {$B_k - 1$};
\path (Ak) -- (Bk) node[pos=0.5,yshift=10pt,green!50!black]
  {$A_k + 1$};
\path (Bk) -- (AB1) node[pos=0.5,yshift=10pt,green!50!black]
  {$B_k + 1$,};

\draw (XY0) -- (XY1) node [pos=0,xshift=-45pt] {Yule processes X, Y};
\draw ($ (XY0) + (0,-0.2) $) -- ++(0,0.4) node[above] {$0$};
\draw ($ (Xk) + (0,-0.2) $) -- ++(0,0.4);
\draw ($ (Yk) + (0,-0.2) $) -- ++(0,0.4);
\draw ($ (XY1) + (0,-0.2) $) -- ++(0,0.4) node[above] {$1$};

\draw [decorate,decoration={brace,amplitude=5pt,mirror,raise=5pt}] (XY0) -- (Xk) node [midway,yshift=-18pt,xshift=-10pt] {$\delta A_k B_k/\lambda(A_k,B_k)$};

\draw [decorate,decoration={brace,amplitude=5pt,mirror,raise=5pt}] (Xk) -- (Yk) node [midway,yshift=-18pt] {$(1-\delta A_k B_k/\lambda(A_k,B_k)) \cdot \frac{X_k}{X_k+Y_k}$};

\draw [decorate,decoration={brace,amplitude=5pt,mirror,raise=5pt}] (Yk) -- (XY1) node [midway,yshift=-18pt,xshift=15pt] {$(1-\delta A_k B_k/\lambda(A_k,B_k)) \cdot \frac{Y_k}{X_k+Y_k}$};

\path (Xk) -- (Yk) node[pos=0.5,yshift=10pt,orange!80!black]
  {$X_k + 1$};
\path (Yk) -- (XY1) node[pos=0.5,yshift=10pt,orange!80!black]
  {$Y_k + 1$};
  
\draw[dotted] ($(Ak) + (0,-0.2)$) -- ($(Xk) + (0,0.2)$);
\draw[dotted] ($(Bk) + (0,-0.2)$) -- ($(Yk) + (0,0.2)$);

\draw[-,line width=1.5mm,blue!40!white] (XY0) -- (Xk);

\draw (C0) -- (C4) node [pos=0,xshift=-45pt,yshift=5pt] {Cases in proof} node[pos=0,xshift=-45pt,yshift=-5pt] {of Lemma~\ref{lem:ab:yule:coupling}};
\draw ($ (C0) + (0,-0.2) $) -- ++(0,0.4);
\draw ($ (C1) + (0,-0.2) $) -- ++(0,0.4);
\draw ($ (C2) + (0,-0.2) $) -- ++(0,0.4);
\draw ($ (C3) + (0,-0.2) $) -- ++(0,0.4);
\draw ($ (C4) + (0,-0.2) $) -- ++(0,0.4);

\path (C0) -- (C1)
  node[midway,yshift=-10pt] {(1)};
\path (C1) -- (C2)
  node[midway,yshift=-10pt] {(2)};
\path (C2) -- (C3)
  node[midway,yshift=-10pt] {(3)};
\path (C3) -- (C4)
  node[midway,yshift=-10pt] {(4)};
  
\end{tikzpicture}
\caption{Discrete-time coupling of A-B chain and two Yule processes X and Y with $\lambda(A_k,B_k) = \gamma(A_k + B_k) + \delta A_k B_k$. Cases for $\xi_{k+1}$ that lead to stuttering steps are shown in blue. The interval relations indicated by the dotted lines are proven by induction in Lemma~\ref{lem:ab:yule:coupling}.
The four cases for the induction step are indicated.}
\label{fig:coupling_AB_XY}
\end{figure}

\paragraph{Analysis for probability of reaching majority consensus.}
We start with two simple technical lemmas that we will use for the comparison of the coupled processes.

\begin{lem}\label{lem:ratios:conversion}
Let $a,b,x,y\in\IR_0^+$ with $\max\{a,b\}>0$ and $\max\{x,y\}>0$.
Then $\frac{b}{a+b} \leq \frac{y}{x+y}$ if and only if $bx\leq ay$.
\end{lem}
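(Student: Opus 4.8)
The plan is to reduce the claimed equivalence to a single cross-multiplication. First I would observe that the hypotheses $\max\{a,b\}>0$ and $\max\{x,y\}>0$, together with $a,b,x,y\in\IR_0^+$, guarantee that both denominators $a+b$ and $x+y$ are strictly positive. This is the only place the hypotheses enter, and it is exactly what is needed so that the two fractions $\frac{b}{a+b}$ and $\frac{y}{x+y}$ are well defined and so that multiplying an inequality by these quantities neither reverses its direction nor divides by zero.

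Next, since $(a+b)(x+y)>0$, the inequality $\frac{b}{a+b}\leq\frac{y}{x+y}$ is equivalent to $b(x+y)\leq y(a+b)$, obtained by multiplying both sides by that positive quantity. Expanding both sides gives $bx+by\leq ay+by$, and cancelling the common term $by$ yields the equivalent inequality $bx\leq ay$. Because each step in this chain is an equivalence (not merely a one-way implication), reading it from left to right proves the forward direction and reading it from right to left proves the converse, so both implications are established at once.

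I expect no genuine obstacle here: the statement is an elementary manipulation of fractions. The only point requiring care is to invoke the positivity of the denominators explicitly—this is precisely what the two max-hypotheses secure—so that every step remains a true equivalence; without that positivity the cancellation and the multiplication would not be reversible. Accordingly I would keep the write-up to the two lines of the cross-multiplication, preceded by the one-sentence remark that $a+b>0$ and $x+y>0$.
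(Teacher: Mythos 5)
Your proposal is correct and follows essentially the same route as the paper: cross-multiplying by the positive quantity $(a+b)(x+y)$ and cancelling the common term $by$. The only (welcome) difference is that you make the positivity of the denominators explicit, which the paper leaves implicit.
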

\begin{proof}
Multiplying both sides by $(a+b)\cdot (x+y)$, we see that the first inequality is equivalent to $bx+by\leq ay+by$, which is in turn equivalent to $bx \leq ay$.
\end{proof}

\begin{lem}\label{lem:diff:to:ratio}
Let $a,b,x,y,m\in\IR_0^+$ with $\max\{a,b\}>0$, $\max\{x,y\}>0$, and $x\geq y$.
If $x \leq a + m$ and $y \geq b + m$, then~$\frac{b}{a+b} \leq \frac{y}{x+y}$.
\end{lem}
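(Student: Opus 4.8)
The plan is to reduce the claimed inequality between ratios to a single product inequality using the preceding Lemma~\ref{lem:ratios:conversion}, and then to establish that product inequality by a short chain of estimates built directly from the two hypotheses $x \leq a+m$ and $y \geq b+m$, together with the ordering $x \geq y$.

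Concretely, since $\max\{a,b\}>0$ and $\max\{x,y\}>0$, Lemma~\ref{lem:ratios:conversion} tells us that $\frac{b}{a+b} \leq \frac{y}{x+y}$ is equivalent to $bx \leq ay$, so it suffices to prove the latter. The hypotheses rewrite as $b \leq y-m$ and $a \geq x-m$. First I would bound the left-hand side: multiplying $b \leq y-m$ by $x \geq 0$ gives $bx \leq (y-m)x = xy - mx$. Symmetrically, multiplying $a \geq x-m$ by $y \geq 0$ gives $ay \geq (x-m)y = xy - my$. The goal $bx \leq ay$ then follows from the chain
\begin{equation*}
bx \;\leq\; xy - mx \;\leq\; xy - my \;\leq\; ay \enspace,
\end{equation*}
where the outer two inequalities are the bounds just derived.

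The crux is the middle inequality $xy - mx \leq xy - my$, which simplifies to $m(x-y) \geq 0$. This is exactly where both remaining hypotheses enter: $m \geq 0$ (since $m \in \IR_0^+$) and $x \geq y$. I expect this to be the only genuinely load-bearing step, and it is worth noting that the hypothesis $x \geq y$ is indispensable here—dropping it would invalidate the sign of $m(x-y)$ and hence the whole chain. The remaining manipulations are routine nonnegativity-preserving multiplications, so no further obstacle is anticipated; combining the chain with Lemma~\ref{lem:ratios:conversion} closes the argument.
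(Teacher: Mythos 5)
Your proof is correct and follows essentially the same route as the paper's: reduce to $bx \leq ay$ via Lemma~\ref{lem:ratios:conversion}, then a three-step product chain whose middle step uses $m\geq 0$ together with an ordering. The paper's chain passes through $ab+bm \leq ab+am$ (after first deriving $a\geq b$ from $a+m\geq x\geq y\geq b+m$), while yours passes through the mirror-image $xy-mx\leq xy-my$ using $x\geq y$ directly; the two are equivalent up to symmetry.
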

\begin{proof}
By Lemma~\ref{lem:ratios:conversion} it suffices to prove $bx\leq ay$.
From the inequality chain $a+m \geq x \geq y \geq b+m$ we get $a\geq b$.
We thus have
$bx \leq b(a+m) = ab + bm \leq ab + am = a(b+m) \leq ay$.
\end{proof}

The crucial property of this coupling is that the initial minority in the A-B process cannot overtake the initial majority before the initial minority overtakes the initial majority in the parallel Yule processes.
We now prove that our construction indeed has this property.

\begin{lem}\label{lem:ab:yule:coupling}
If $X_0 = A_0 \geq B_0 = Y_0$ and $X_k \geq Y_k$ for all $0\leq k\leq K$, then
$X_k - Y_k \leq A_k - B_k$ for all $0\leq k\leq K$.
\end{lem}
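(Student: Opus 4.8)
The plan is to prove the statement by induction on~$k$, tracking the two differences $A_k-B_k$ and $X_k-Y_k$ and showing the former dominates the latter at every step. The base case $k=0$ is immediate, since $X_0-Y_0 = A_0-B_0$ by assumption. For the inductive step I would assume $X_k-Y_k \leq A_k-B_k$ for some $k<K$ and then examine how each difference changes under the four outcomes of~$\xi_{k+1}$ prescribed by the coupling in~\eqref{eq:value:coupling}, verifying that the inequality survives in each outcome.

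The crux is to order the two decision thresholds that separate an ``up'' move from a ``down'' move of the difference in the A-B chain and in the Yule pair. Writing $\lambda = \gamma(A_k+B_k)+\delta A_k B_k$, a short rearrangement of the interval endpoints in~\eqref{eq:value:coupling} shows that the A-B chain increments $A_k$ (raising its difference) exactly when $\xi_{k+1}$ falls below $\frac{\delta A_k B_k}{\lambda} + \frac{\gamma(A_k+B_k)}{\lambda}\cdot\frac{A_k}{A_k+B_k}$, while the Yule pair increments $X_k$ (raising its difference) exactly when $\xi_{k+1}$ falls below the same expression with $\frac{X_k}{X_k+Y_k}$ in place of $\frac{A_k}{A_k+B_k}$. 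Since both thresholds share the common offset and positive multiplier, everything reduces to the single comparison $\frac{X_k}{X_k+Y_k} \leq \frac{A_k}{A_k+B_k}$, equivalently $\frac{B_k}{A_k+B_k} \leq \frac{Y_k}{X_k+Y_k}$.

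To obtain this comparison I would apply Lemma~\ref{lem:diff:to:ratio} with $a=A_k$, $b=B_k$, $x=X_k$, $y=Y_k$, and the tailor-made choice $m = Y_k-B_k$. Its hypothesis $x\geq y$ is the assumption $X_k\geq Y_k$, and the two remaining hypotheses $x\leq a+m$ and $y\geq b+m$ both collapse to $X_k-Y_k \leq A_k-B_k$, which is precisely the induction hypothesis; the conclusion is exactly the desired threshold inequality. (Equivalently, one could invoke Lemma~\ref{lem:ratios:conversion} and verify $B_k X_k \leq A_k Y_k$ directly from the induction hypothesis.) This step is where the whole argument turns: the correct threshold ordering is what rules out the dangerous region in which the A-B difference would drop while the Yule difference rose, and it is handed to us by the induction hypothesis through the choice of~$m$.

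With the thresholds ordered, the unit interval decomposes into the four regions of \figref{fig:coupling_AB_XY}, and I would close with the routine case check: on the death/stuttering region both differences are unchanged; where both chains increment their first species both differences rise by one; on the gap between the two thresholds the A-B difference rises by one while the Yule difference falls by one, so the inequality only loosens; and where both chains increment their second species both differences fall by one. In every case $X_{k+1}-Y_{k+1}\leq A_{k+1}-B_{k+1}$ follows from the induction hypothesis, completing the step. I expect no genuine obstacle beyond the threshold ordering itself; once Lemma~\ref{lem:diff:to:ratio} supplies it, the remaining work is bookkeeping of the four intervals.
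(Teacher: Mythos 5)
Your overall architecture --- induction on $k$, reduction to the threshold comparison $\frac{B_k}{A_k+B_k} \leq \frac{Y_k}{X_k+Y_k}$, then a four-region case check --- matches the paper's, and the case check itself is fine. The gap is precisely at the step you call the crux. You invoke Lemma~\ref{lem:diff:to:ratio} with $m = Y_k - B_k$ and claim its hypotheses ``both collapse to'' the induction hypothesis $X_k - Y_k \leq A_k - B_k$. They do not: the lemma requires $m \in \IR_0^+$, i.e., $Y_k \geq B_k$, and this is not implied by the difference inequality. (The hypothesis $y \geq b+m$ collapses to the tautology $Y_k \geq Y_k$, not to the induction hypothesis; the content you are silently using is the nonnegativity of $m$.) Your parenthetical alternative fails for the same reason: $B_kX_k \leq A_kY_k$ does not follow from $X_k - Y_k \leq A_k - B_k$ and $X_k \geq Y_k$ alone --- take $(A_k,B_k,X_k,Y_k) = (10,3,5,1)$, where $X_k - Y_k = 4 \leq 7 = A_k - B_k$ but $B_kX_k = 15 > 10 = A_kY_k$, so the threshold ordering genuinely reverses. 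Such states happen to be unreachable under the coupling, but your induction hypothesis is too weak to exclude them, so the induction does not close as written.

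The fix is to strengthen the invariant, which is exactly what the paper does: it carries $X_k \leq A_k + m_k$ and $Y_k \geq B_k + m_k$, where $m_k$ is the number of death steps up to step $k$ (so $m_k \geq 0$), verifies both inequalities in each of the four regions, and only at the end subtracts them to obtain $X_k - Y_k \leq A_k - B_k$. Equivalently, you could keep your difference invariant and add the second invariant $Y_k \geq B_k$: it holds at $k=0$; it is preserved in every region, since $Y-B$ never decreases under the coupling (it grows by one on a death step and on the region where $A$ and $Y$ both increment, and is unchanged otherwise); and together with $X_k \geq Y_k$ (whence $A_k \geq B_k$) it gives $B_kX_k \leq B_k(A_k-B_k) + B_kY_k \leq Y_k(A_k-B_k) + B_kY_k = A_kY_k$, which is the threshold ordering via Lemma~\ref{lem:ratios:conversion}. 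With that one additional invariant your argument goes through.
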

\begin{proof}
We first show by induction on~$k$ that $X_k \leq A_k+m_k$ and $Y_k \geq B_k+m_k$ for all $0\leq k\leq K$, where~$m_k$ is the number of death reactions up to step~$k$.
In the base case $k=0$ we even have equality.
For the induction step $k\mapsto k+1$,
we distinguish four cases; see \figref{fig:coupling_AB_XY}.
\begin{enumerate}
\item $\xi_{k+1} \in \left[0 \,,\, \frac{\delta A_k B_k}{\gamma(A_k+B_k)+\delta A_k B_k} \right)$:
Then $m_{k+1} = m_k + 1$, $A_{k+1}=A_k-1$, $B_{k+1}=B_k-1$, $X_{k+1}=X_k$, and $Y_{k+1}=Y_k$.
Hence,
$X_{k+1} = X_k \leq A_k + m_k = (A_{k+1} + 1) + m_k = A_{k+1} + m_{k+1}$
and
$Y_{k+1} = Y_k \geq B_k + m_k = (B_{k+1} + 1) + m_k = B_{k+1} + m_{k+1}$
by the induction hypothesis.

\item $\xi_{k+1} \in \left[\frac{\delta A_k B_k}{\gamma(A_k+B_k)+\delta A_k B_k}\,,\, 1 - \frac{\gamma (A_k+B_k)}{\gamma(A_k+B_k)+\delta A_k B_k} \cdot \frac{Y_k}{X_k+Y_k} \right)$:
In particular we have
\begin{equation}
\begin{split}
\xi_{k+1}
& \leq
1 - \frac{\gamma (A_k+B_k)}{\gamma(A_k+B_k)+\delta A_k B_k} \cdot \frac{Y_k}{X_k+Y_k}
\\ & \leq
1 - \frac{\gamma (A_k+B_k)}{\gamma(A_k+B_k)+\delta A_k B_k} \cdot \frac{B_k}{A_k+B_k}
\\ & =
1 - \frac{\gamma B_k}{\gamma(A_k+B_k)+\delta A_k B_k}
\end{split}
\end{equation}
by the induction hypothesis and Lemma~\ref{lem:diff:to:ratio}.
This implies the interval relation indicated in \figref{fig:coupling_AB_XY}.

Hence, $m_{k+1} = m_k$, $A_{k+1} = A_k+1$, $B_{k+1} = B_k$, $X_{k+1} = X_k + 1$, and $Y_{k+1} = Y_k$.
But this means
$X_{k+1} = X_k + 1 \leq A_k + m_k + 1 = (A_k + 1) + m_k = A_{k+1} + m_{k+1}$
and
$Y_{k+1} = Y_k \geq B_k + m_k = B_{k+1} + m_{k+1}$
by the induction hypothesis.

\item $\xi_{k+1} \in \left[1 - \frac{\gamma (A_k+B_k)}{\gamma(A_k+B_k)+\delta A_k B_k} \cdot \frac{Y_k}{X_k+Y_k} \,,\,  1 - \frac{\gamma B_k}{\gamma(A_k+B_k)+\delta A_k B_k} \right)$:
We have $m_{k+1} = m_k$, $A_{k+1} = A_k+1$, $B_{k+1} = B_k$, $X_{k+1} = X_k $, and $Y_{k+1} = Y_k+1$.
But this means
$X_{k+1} = X_k \leq X_k + 1 \leq A_k + m_k + 1 = (A_k + 1) + m_k = A_{k+1} + m_{k+1}$
and
$Y_{k+1} = Y_k +1 \geq Y_k \geq B_k + m_k = B_{k+1} + m_{k+1}$
by the induction hypothesis.

\item $\xi_{k+1} \in \left[1 - \frac{\gamma B_k}{\gamma(A_k+B_k)+\delta A_k B_k}\,,\, 1 \right)$:
In particular we have
\begin{equation}
\begin{split}
\xi_{k+1}
& \geq
1 - \frac{\gamma B_k}{\gamma(A_k+B_k)+\delta A_k B_k}
\\ & =
1 - \frac{\gamma (A_k+B_k)}{\gamma(A_k+B_k)+\delta A_k B_k} \cdot \frac{B_k}{A_k+B_k}
\\ & \geq
1 - \frac{\gamma (A_k+B_k)}{\gamma(A_k+B_k)+\delta A_k B_k} \cdot \frac{Y_k}{X_k+Y_k}
\end{split}
\end{equation}
by the induction hypothesis and Lemma~\ref{lem:diff:to:ratio}.
Hence $m_{k+1} = m_k$, $A_{k+1} = A_k$, $B_{k+1} = B_k + 1$, $X_{k+1} = X_k$, and $Y_{k+1} = Y_k + 1$.
But this means
$X_{k+1} = X_k \leq A_k + m_k = A_{k+1} + m_{k+1}$
and
$Y_{k+1} = Y_k + 1 \geq B_k + m_k + 1 = (B_k + 1) + m_k = B_{k+1} + m_{k+1}$
by the induction hypothesis.

The lemma now follows via $X_k - Y_k \leq (A_k + m_k) - (B_k + m_k) = A_k - B_k$.
\end{enumerate}
\end{proof}

\begin{lem}\label{lem:collision:comparison}
If $A_0=X_0$ and $B_0=Y_0$, then
$\IP(\exists k\colon A_k = B_k) \leq \IP(\exists k\colon X_k = Y_k)$.
\end{lem}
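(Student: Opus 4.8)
The plan is to work entirely inside the common probability space of the coupled process $(A_k,B_k,X_k,Y_k)$ driven by the single i.i.d.\ sequence $(\xi_k)$, and to establish the \emph{deterministic}, realization-by-realization event inclusion $\{\exists k\colon A_k = B_k\} \subseteq \{\exists k\colon X_k = Y_k\}$. Since the coupling in~\eqref{eq:value:coupling} leaves the marginal law of $(A_k,B_k)_k$ equal to that of the A-B jump chain and the marginal law of $(X_k,Y_k)_k$ equal to that of the two independent Yule processes—the inserted stuttering steps never change $X_k-Y_k$ and are therefore irrelevant to whether $X_k=Y_k$ ever occurs—this inclusion immediately yields the claimed inequality of probabilities. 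By symmetry we may assume $A_0 = X_0 \geq B_0 = Y_0$; if $A_0 = B_0$ both events occur already at $k=0$ and there is nothing to prove.

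The first step is to record the elementary observation that both $A_k - B_k$ and $X_k - Y_k$ change by at most one per step. Inspecting~\eqref{eq:value:coupling}, a death leaves $A-B$ unchanged while each birth shifts it by $\pm 1$, and likewise a stuttering step leaves $X-Y$ unchanged while each Yule birth shifts it by $\pm 1$. In particular, starting from $X_0 - Y_0 \geq 0$, the integer walk $X_k - Y_k$ cannot become negative without first passing through the value $0$.

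Next I would fix a realization in which a collision $A_{k^*} = B_{k^*}$ occurs and take $k^*$ to be the first such index, then split into two cases according to whether the Yule majority survives up to $k^*$. If $X_k \geq Y_k$ holds for all $0 \leq k \leq k^*$, then Lemma~\ref{lem:ab:yule:coupling} applies with $K = k^*$ and gives $X_{k^*} - Y_{k^*} \leq A_{k^*} - B_{k^*} = 0$; combined with $X_{k^*} \geq Y_{k^*}$ this forces $X_{k^*} = Y_{k^*}$, a Yule collision. Otherwise there is some $j \leq k^*$ with $X_j < Y_j$, and since $X_0 - Y_0 \geq 0$ the unit-step property just noted guarantees that the walk $X_k - Y_k$ hit $0$ at some earlier index, again producing a Yule collision. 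In both cases the realization lies in $\{\exists k\colon X_k = Y_k\}$, which establishes the inclusion and hence the lemma.

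The step I expect to require the most care is the boundary case in which the Yule majority flips, since there the hypothesis of Lemma~\ref{lem:ab:yule:coupling} is no longer available and one must instead fall back on the intermediate-value argument for the unit-step walk $X_k - Y_k$; getting the interaction between these two cases exactly right (and confirming that the coupling faithfully reproduces both marginal laws, so that the deterministic inclusion really does transfer to the uncoupled processes) is the crux of the argument, while the remaining bookkeeping is routine.
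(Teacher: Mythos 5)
Your proof is correct and follows essentially the same route as the paper, which deduces the claim from the pathwise coupling via Lemma~\ref{lem:ab:yule:coupling} applied at the first index $k$ with $A_k=B_k$. In fact your write-up is more careful than the paper's one-sentence argument, since you explicitly handle the case where $X_j<Y_j$ occurs before the A-B collision (via the unit-step intermediate-value observation for $X_k-Y_k$), a case the paper's proof leaves implicit.
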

\begin{proof}
By Lemma~\ref{lem:ab:yule:coupling}, if~$k$ is minimal such that $A_k = B_k$, then $X_k = Y_k$.
\end{proof}

As defined in the coupling the parallel Yule processes $(X_k,Y_k)$ can have stuttering steps where $(X_{k+1},Y_{k+1})=(X_k,Y_k)$.
However, this happens only finitely often almost surely.
This allows us to analyze a version of  the process~$(X_k,Y_k)$ without stuttering steps in the sequel.

\begin{lem}\label{lem:slow:yule}
If $(\tilde{X}_k, \tilde{Y}_k)$ is the product of two independent pure-birth processes with $\tilde{X}_0 = X_0$ and $\tilde{Y}_0 = Y_0$, then $\IP(\exists k\colon \tilde{X}_k = \tilde{Y}_k) = \IP(\exists k \colon X_k = Y_k)$.
\end{lem}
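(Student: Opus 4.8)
The plan is to show that deleting the stuttering steps from $(X_k,Y_k)$ turns it into an exact realization of the jump chain of two independent pure-birth processes, while changing neither the law of the surviving chain nor the collision event $\{\exists k\colon X_k=Y_k\}$. Accordingly I would proceed in three stages: locate the stuttering steps, show they are almost surely finite in number, and then identify the conditional one-step law of the genuine moves.

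First I would read off from the coupling~\eqref{eq:value:coupling} that $(X_k,Y_k)$ stays put exactly when $\xi_{k+1}\in\bigl[0,\delta A_kB_k/\lambda(A_k,B_k)\bigr)$, i.e.\ precisely on those steps where the A-B chain fires a death reaction. Since the A-B chain reaches consensus at a finite step almost surely (as used for the time bound in Theorem~\ref{thm:consensus}) and the death propensity $\delta A_kB_k$ vanishes once a species is extinct, only finitely many death reactions—and hence only finitely many stuttering steps of $(X_k,Y_k)$—occur almost surely. Therefore the process makes infinitely many genuine moves, and the de-stuttered sequence $(X'_j,Y'_j)_{j\ge0}$, obtained by recording $(X_{\tau_j},Y_{\tau_j})$ at the successive non-stuttering step indices $\tau_0<\tau_1<\cdots$, is almost surely a well-defined infinite sequence with $(X'_0,Y'_0)=(X_0,Y_0)$.

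Second I would compute the conditional law of a genuine move. Writing $p=\delta A_kB_k/\lambda(A_k,B_k)$ for the stuttering probability, the intervals in~\eqref{eq:value:coupling} assign $X_k+1$ probability $(1-p)\,X_k/(X_k+Y_k)$ and $Y_k+1$ probability $(1-p)\,Y_k/(X_k+Y_k)$. Conditioning on the complementary event ``the step is not a stutter'' cancels the common factor $1-p$, leaving $X_k+1$ with probability $X_k/(X_k+Y_k)$ and $Y_k+1$ with probability $Y_k/(X_k+Y_k)$, independently of the A-B coordinates $A_k,B_k$. Because $(X_k,Y_k)$ is frozen throughout a run of stuttering steps (only $A,B$ change), these conditional probabilities depend on the past only through the current value $(X'_j,Y'_j)$; hence $(X'_j,Y'_j)$ is a Markov chain whose transition kernel is exactly the embedded jump kernel of two independent pure-birth processes of common rate~$\gamma$, with matching initial value. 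Thus $(X'_j,Y'_j)$ and $(\tilde X_j,\tilde Y_j)$ are identically distributed.

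Finally, since a stuttering step merely repeats the previous state, the set of values visited by $(X_k,Y_k)$ coincides with that visited by $(X'_j,Y'_j)$, so $\{\exists k\colon X_k=Y_k\}=\{\exists j\colon X'_j=Y'_j\}$; combined with the distributional identity this gives $\IP(\exists k\colon X_k=Y_k)=\IP(\exists j\colon X'_j=Y'_j)=\IP(\exists k\colon \tilde X_k=\tilde Y_k)$. I expect the middle stage to be the main obstacle: one must argue carefully that the deleted stuttering steps do not bias the direction of the surviving moves even though the stuttering probability $p$ depends on $A_k,B_k$, and that the surviving chain is genuinely Markov. This is precisely the cancellation of the $1-p$ factor together with the freezing of $(X_k,Y_k)$ between consecutive genuine moves.
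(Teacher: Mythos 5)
Your proof is correct and follows essentially the same route as the paper, whose entire argument for this lemma is the two-sentence observation that Lemma~\ref{lem:min:expected:time} forces only finitely many deaths, hence only finitely many stuttering steps, in the coupled chain. You supply the details the paper leaves implicit---that stutters coincide exactly with death steps, that conditioning a genuine move on ``not a stutter'' cancels the common factor $1-\delta A_kB_k/\lambda(A_k,B_k)$ and leaves precisely the Yule jump kernel $X_k/(X_k+Y_k)$, and that deleting stutters preserves the collision event---so your write-up is, if anything, more complete than the published one.
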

\begin{proof}
Lemma~\ref{lem:min:expected:time} implies that there are only finitely many deaths in the coupled chain almost surely.
There are hence only finitely many stuttering steps in $(X_k,Y_k)$ almost surely.
\end{proof}

By slight abuse of notation, we will use~$(X_k,Y_k)$ to refer to the parallel Yule processes without any stuttering steps.

Two parallel independent Yule processes are known to be related to a beta distribution, which we will use below.
The regularized incomplete beta function $I_z(\alpha,\beta)$ is defined as
\begin{equation}
I_z(\alpha,\beta)
=
\int_0^z t^{\alpha-1} (1-t)^{\beta-1}\,dt
\ \Big/
\int_0^1 t^{\alpha-1} (1-t)^{\beta-1}\,dt
\enspace.
\end{equation}

\begin{lem}\label{lem:collision:yule}
If $X_0 > Y_0$,
then
$\IP\left(\exists k\colon X_k = Y_k\right) = 2\cdot I_{1/2}(X_0,Y_0)$.
\end{lem}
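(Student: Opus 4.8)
The plan is to recognize the jump chain of two independent pure-birth processes as a Pólya urn and then exploit the de Finetti representation of that urn. After the stuttering steps have been removed via Lemma~\ref{lem:slow:yule}, the process $(X_k,Y_k)$ increments $X$ with probability $X_k/(X_k+Y_k)$ and $Y$ with probability $Y_k/(X_k+Y_k)$, since the Yule birth rates are $\gamma X_k$ and $\gamma Y_k$. This is exactly a Pólya urn started with $X_0$ balls of one color and $Y_0$ of the other. I would then invoke the classical fact that the limiting fraction $W=\lim_k X_k/(X_k+Y_k)$ exists almost surely and is $\mathrm{Beta}(X_0,Y_0)$-distributed, and—crucially—that conditionally on $W=w$ the sequence of color draws is i.i.d.\ $\mathrm{Bernoulli}(w)$.

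Conditioning on $W=w$, the gap $D_k=X_k-Y_k$ evolves as $D_k=D_0+S_k$, where $D_0=X_0-Y_0>0$ and $S_k$ is a $\pm1$ random walk that steps $+1$ with probability $w$ and $-1$ with probability $1-w$. Because the steps have unit size and $D_0\geq1$, the event $\{\exists k\colon X_k=Y_k\}$ coincides with the event that $S_k$ ever reaches the level $-D_0$. A standard gambler's-ruin first-step analysis gives that a walk started at $0$ reaches $-1$ with probability $\min\{1,(1-w)/w\}$, and by the strong Markov property the probability of ever reaching $-D_0$ is its $D_0$-th power. Hence the conditional collision probability is $\bigl(\min\{1,(1-w)/w\}\bigr)^{D_0}$, equal to $1$ for $w\leq1/2$ and to $\bigl((1-w)/w\bigr)^{X_0-Y_0}$ for $w>1/2$.

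The final step integrates this against the $\mathrm{Beta}(X_0,Y_0)$ density $f_W(w)=w^{X_0-1}(1-w)^{Y_0-1}/B(X_0,Y_0)$, splitting the integral at $w=1/2$. On $[0,1/2)$ the integrand is just $f_W$, contributing exactly $I_{1/2}(X_0,Y_0)$. On $(1/2,1]$ the factor $\bigl((1-w)/w\bigr)^{X_0-Y_0}$ collapses the integrand to $w^{Y_0-1}(1-w)^{X_0-1}/B(X_0,Y_0)$; the substitution $u=1-w$ together with the symmetry $B(X_0,Y_0)=B(Y_0,X_0)$ turns this piece into a second copy of $I_{1/2}(X_0,Y_0)$. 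Adding the two contributions yields $2\,I_{1/2}(X_0,Y_0)$.

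I expect the main obstacle to be justifying the conditional i.i.d.\ structure cleanly: the exchangeability/de Finetti property of the Pólya urn is the load-bearing fact, and one must verify that $W$ is genuinely $\mathrm{Beta}(X_0,Y_0)$ and that the gambler's-ruin computation is applied to the correct walk—in particular that the recurrent case $w\leq1/2$ really gives probability exactly $1$. Once that reduction is in place, the remaining work—the splitting of the Beta integral and the symmetrizing substitution—is routine.
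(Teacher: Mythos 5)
Your proposal is correct, but it takes a genuinely different route from the paper's. The paper also starts from the fact that $X_k/(X_k+Y_k)$ converges almost surely to a $\mathrm{Beta}(X_0,Y_0)$ limit, but then it never computes the collision probability directly: it writes $I_{1/2}(X_0,Y_0)=\IP(\lim_k X_k/(X_k+Y_k)<1/2)$, decomposes this by the law of total probability on the event $\{\exists k\colon X_k=Y_k\}$, observes that on the complementary event $\{\forall k\colon X_k>Y_k\}$ the limit is at least $1/2$ (so that term vanishes), and argues by symmetry and the strong Markov property that, conditioned on a collision, the limit is below $1/2$ with probability exactly $1/2$. You instead go through the full de Finetti representation of the P\'olya urn: conditionally on the limit $W=w$ the draws are i.i.d.\ $\mathrm{Bernoulli}(w)$, the gap is a biased gambler's-ruin walk whose ruin probability is $\bigl(\min\{1,(1-w)/w\}\bigr)^{X_0-Y_0}$, and the beta integral split at $1/2$ evaluates to $2\,I_{1/2}(X_0,Y_0)$ after the substitution $u=1-w$. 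Both arguments are sound; your integral substitution on $(1/2,1]$ is the analytic shadow of the paper's probabilistic reflection step. The paper's version is shorter and needs only the marginal law of the limit, whereas yours requires the stronger conditional i.i.d.\ structure but in exchange produces an explicit closed-form expression for the collision probability, which would remain usable in situations where the reflection symmetry at $1/2$ is unavailable. One small point to make rigorous in your write-up: the gambler's-ruin computation must be applied to the conditioned walk for almost every $w$, and the boundary case $w=1/2$ is a null set under the beta law, so it causes no harm; you flag this implicitly but it deserves a sentence.
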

\begin{proof}
The sequence of ratios $\frac{X_k}{X_k+Y_k}$ converges with probability~$1$ and the limit is distributed according to a beta distribution with parameters $\alpha = X_0$ and $\beta = Y_0$ \cite[Theorem~3.2]{mahmoud09polya}.
In particular, the probability that the limit is less than~$1/2$ is equal to the beta distribution's cumulative distribution function evaluated at~$1/2$, i.e., equal to $I_{1/2}(X_0,Y_0)$.
Because initially we have $X_0>Y_0$, the law of total probability gives:
\begin{equation}\label{eq:collision:yule:total}
\begin{split}
I_{1/2}(X_0,Y_0)
=
\IP\left( \lim_{k\to\infty} \frac{X_k}{X_k+Y_k} < \frac{1}{2} \right)
=\ &
\IP\left( \lim_{k\to\infty} \frac{X_k}{X_k+Y_k} < \frac{1}{2} \ \Big|\ \exists k\colon X_k=Y_k \right)
\cdot
\IP\left( \exists k\colon X_k=Y_k \right)
\\
& +
\IP\left( \lim_{k\to\infty} \frac{X_k}{X_k+Y_k} < \frac{1}{2} \ \wedge\  \forall k\colon X_k > Y_k \right)
\end{split}
\end{equation}
Now, if $\forall k\colon X_k > Y_k$, then $\lim_k \frac{X_k}{X_k+Y_k} \geq {1}/{2}$, which shows that the second term in the sum in~\eqref{eq:collision:yule:total} is zero.
Further, under the condition $\exists k\colon X_k=Y_k$, it is equiprobable for the limit of $\frac{X_k}{X_k+Y_k}$ to be larger or smaller than~$1/2$ by symmetry and the strong Markov property.
This shows that the right-hand side of~\eqref{eq:collision:yule:total} is equal to $\frac{1}{2}\cdot\IP\left(\exists k\colon X_k = Y_k\right)$.
But then $\IP\left(\exists k\colon X_k = Y_k\right) = 2\cdot I_{1/2}(X_0,Y_0)$, which concludes the proof.
\end{proof}

We define the event ``$B$ wins'' as~$A$ eventually becoming extinct.
Then, we have:

\begin{lem}\label{lem:collision:ab}
If $A_0 > B_0$, then $\IP\left(\exists k\colon A_k = B_k\right) = 2\cdot \IP(\text{$B$ wins})$.
\end{lem}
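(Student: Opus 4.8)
The plan is to relate the two absorbing events of the A-B chain—``$A$ wins'' (i.e.\ $B$ becomes extinct) and ``$B$ wins'' (i.e.\ $A$ becomes extinct)—to the collision event $\exists k\colon A_k=B_k$ studied in Lemma~\ref{lem:collision:comparison}. The key observation is that since $A_0>B_0$, for the initial minority~$B$ to eventually win, the difference $A_k-B_k$ must at some point change sign; but $A_k-B_k$ changes only by increments of $0$ (a death step decrements both, leaving the difference fixed) or $\pm1$ (a birth step of~$A$ or~$B$). Hence the difference cannot jump past~$0$: if it ever becomes negative it must first equal~$0$. Therefore the event ``$B$ wins'' is contained in the collision event $\{\exists k\colon A_k=B_k\}$, and the whole argument reduces to showing that, conditioned on a collision occurring, $A$ and $B$ are equally likely to be the eventual winner.

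First I would record the parity/monotonicity fact that $A_k-B_k$ moves in steps of size at most~$1$, so that starting from $A_0-B_0>0$ the process reaches a negative value only after passing through $A_k=B_k$. This immediately gives the inclusion $\{\text{$B$ wins}\}\subseteq\{\exists k\colon A_k=B_k\}$, and symmetrically $\{\text{$A$ wins after a collision}\}$ is also contained in the collision event. Next I would invoke the strong Markov property at the first collision time~$\tau$, where $A_\tau=B_\tau$: from a symmetric state the subsequent evolution of the A-B chain is invariant under swapping the roles of $A$ and $B$, since the three reaction rates $\gamma A$, $\gamma B$, $\delta AB$ are symmetric in $A$ and $B$. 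Consequently, conditioned on $\{\exists k\colon A_k=B_k\}$ and on the state at time~$\tau$, the events ``$A$ eventually wins'' and ``$B$ eventually wins'' are equiprobable.

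The computation then proceeds by the law of total probability exactly as in the proof of Lemma~\ref{lem:collision:yule}. Writing $\IP(\exists k\colon A_k=B_k)$ as the sum over the two post-collision outcomes, symmetry forces
\begin{equation}
\IP(\text{$B$ wins}\mid\exists k\colon A_k=B_k) = \tfrac12,
\end{equation}
and since $\{\text{$B$ wins}\}\subseteq\{\exists k\colon A_k=B_k\}$ the unconditional probability satisfies $\IP(\text{$B$ wins}) = \tfrac12\,\IP(\exists k\colon A_k=B_k)$, which rearranges to the claimed identity $\IP(\exists k\colon A_k=B_k)=2\,\IP(\text{$B$ wins})$.

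The main obstacle I anticipate is justifying the symmetry argument rigorously at the first collision, rather than merely asserting it. One must be careful that the collision time~$\tau$ is an almost-surely finite stopping time on the event under consideration (which follows from the constant-time consensus bound already established via Lemmas~\ref{lem:coupling:probs} and~\ref{lem:min:expected:time}, guaranteeing that the chain is eventually absorbed and in particular that after a collision exactly one species survives with probability~$1$), and that the exact swap symmetry of the transition kernel at states with $A=B$ makes the post-$\tau$ winner a fair coin flip independent of the history up to~$\tau$. Once the stopping-time finiteness and the kernel's $A\leftrightarrow B$ symmetry are in hand, the strong Markov property delivers the equiprobability cleanly and the remaining algebra is immediate.
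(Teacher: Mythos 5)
Your argument is essentially identical to the paper's: the same law-of-total-probability decomposition over the collision event, the same observation that $B$ cannot win without $A_k-B_k$ first hitting zero, and the same appeal to the $A\leftrightarrow B$ symmetry of the transition rates plus the strong Markov property at the first collision to get the conditional probability $1/2$. You merely spell out more explicitly why the difference cannot skip over zero and why the stopping-time argument is legitimate, which the paper leaves implicit.
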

\begin{proof}
Similarly to the proof of Lemma~\ref{lem:collision:yule},
by the law of total probability, we have:
\begin{equation}\label{eq:collision:ab:total}
\IP\left( \text{$B$ wins} \right)
=
\IP\left( \text{$B$ wins} \mid \exists k\colon A_k=B_k \right)
\cdot
\IP\left( \exists k\colon A_k=B_k \right)
+
\IP\left( \text{$B$ wins} \wedge \forall k\colon A_k > B_k \right)
\end{equation}
If $\forall k\colon A_k > B_k$, then $B$ cannot win, i.e., the second term in the right-hand side of~\eqref{eq:collision:ab:total} is zero.
Also, by symmetry and the strong Markov property, it is $\IP\left( \text{$B$ wins} \mid \exists k\colon A_k=B_k \right)=1/2$. 
A simple algebraic manipulation now concludes the proof.
\end{proof}

Combining the previous two lemmas with the coupling, we get an upper bound on the probability that the A-B protocol fails to reach majority consensus.
This upper bound is in terms of the regularized incomplete beta function.

\begin{lem}\label{lem:ab:beta:bound}
If $A_0 \geq B_0$, then
the A-B protocol fails to reach majority consensus with probability at most $I_{1/2}(A_0,B_0)$.
\end{lem}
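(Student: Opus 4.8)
The plan is to reduce the failure probability to the event ``$B$ wins'' and then push that quantity through the discrete-time Yule coupling built above, letting the two stray factors of~$2$ cancel. First I would argue that, since $A_0 \geq B_0$, the initial majority is~$A$, so for $A_0 > B_0$ the protocol fails to reach majority consensus \emph{exactly} when~$A$ eventually becomes extinct, i.e.\ when ``$B$ wins.'' Concretely: by the first part of Theorem~\ref{thm:consensus} consensus is reached almost surely, so the event ``no consensus ever'' is null; and since ``$A$ extinct'' already entails that consensus has been reached, the failure event coincides up to a null set with ``$B$ wins.'' Hence $\IP(\text{fail}) = \IP(\text{$B$ wins})$. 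This identification is the only genuinely conceptual step of the proof; everything after it is bookkeeping with lemmas already established.

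Next I would assemble the chain of (in)equalities. Starting from $\IP(\text{fail}) = \IP(\text{$B$ wins})$, Lemma~\ref{lem:collision:ab} (which needs the strict $A_0 > B_0$) gives $\IP(\text{$B$ wins}) = \tfrac12\,\IP(\exists k\colon A_k = B_k)$. The coupling comparison of Lemma~\ref{lem:collision:comparison} then bounds the A-B collision probability by the Yule collision probability, $\IP(\exists k\colon A_k = B_k) \leq \IP(\exists k\colon X_k = Y_k)$, and Lemma~\ref{lem:slow:yule} lets me replace the stuttering Yule pair by a genuine product of two independent pure-birth processes without changing this probability. Finally Lemma~\ref{lem:collision:yule}, applicable because $X_0 = A_0 > B_0 = Y_0$, evaluates it as $\IP(\exists k\colon X_k = Y_k) = 2\,I_{1/2}(A_0,B_0)$. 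Multiplying through, $\IP(\text{fail}) \leq \tfrac12 \cdot 2\,I_{1/2}(A_0,B_0) = I_{1/2}(A_0,B_0)$, which is the claim.

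The step I would watch most carefully is the direction of the single inequality: it is an ``$\leq$'' sandwiched between two exact factor-of-$2$ identities, so the whole argument only works because the coupling of Lemma~\ref{lem:ab:yule:coupling} makes the \emph{A-B} minority catch up no sooner than the \emph{Yule} minority (had the comparison pointed the other way the bound would be useless). The collision lemmas also genuinely require $A_0 > B_0$, since at a tie $A_k = B_k$ holds already at $k=0$; I would therefore treat $A_0 = B_0$ as a separate boundary case. That case is precisely $\Delta = 0$, the regime where the downstream guarantee $1 - e^{-\Omega(\Delta^2/n)}$ of Theorem~\ref{thm:consensus} is vacuous, so the strict case $A_0 > B_0$ carries all of the content that is actually used in the sequel.
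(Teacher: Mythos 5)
Your proof is correct and follows essentially the same route as the paper: identify failure with the event ``$B$ wins,'' then chain Lemma~\ref{lem:collision:ab}, Lemma~\ref{lem:collision:comparison} (via the coupling of Lemma~\ref{lem:ab:yule:coupling}), Lemma~\ref{lem:slow:yule}, and Lemma~\ref{lem:collision:yule} so that the two factors of~$2$ cancel. Your explicit treatment of the identification $\IP(\text{fail})=\IP(\text{$B$ wins})$ and of the boundary case $A_0=B_0$ is slightly more careful than the paper's one-line argument, but it is the same proof.
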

\begin{proof}
Setting $X_0=A_0$ and $Y_0=B_0$, and
combining Lemmas~\ref{lem:collision:comparison}, \ref{lem:collision:yule}, and~\ref{lem:collision:ab}, we get
$\IP(\text{$B$ wins}) = \frac{1}{2}\cdot \IP(\exists k\colon A_k = B_k) \leq \frac{1}{2}\cdot \IP(\exists k\colon  X_k = Y_k) = I_{1/2}(A_0,B_0)$.
\end{proof}

Due to Lemma~\ref{lem:ab:beta:bound}, it only remains to upper-bound the term $I_{1/2}(\alpha,\beta)$.
Lemma~\ref{lem:beta:bound:delta} provides such a bound.
\opt{append}{Its proof is given in the appendix.}

\begin{lemrep}\label{lem:beta:bound:delta}
For $m,\Delta \in \IN$, it holds that
$\displaystyle I_{1/2}(m+\Delta,m) = \exp\left(-\Omega\left (\frac{\Delta^2}{m} \right) \right)$.
\end{lemrep}
\begin{proof}
We have the well-known formula
\begin{equation}
I_{z}(a,b)
=
\sum_{j=0}^{a-1} \binom{a+b-1}{j}
z^{a+b-1-j} (1-z)^j\label{eq:Iz_binom}
\end{equation}
for $a,b\in\IN$ with $a\geq b$.
With $z=1/2$, $a=m+\Delta$, and $b=m$, this implies
\begin{equation}
I_{1/2}(m+\Delta,m)
=
\frac{1}{2^{2m+\Delta-1}}
\sum_{j=0}^{m+\Delta-1} \binom{2m+\Delta-1}{j}
\enspace.
\end{equation}
The sum of the first binomial coefficients can be upper-bounded (e.g., \cite[Proof of Theorem~5.3.2]{lovasz03}) via
\begin{equation}
\sum_{j=0}^k \binom{n}{j}
\leq
2^{n-1} \exp\left( - \frac{(n-2k-2)^2}{4n-4k-4} \right)
\enspace.\label{eq:binom_exp}
\end{equation}
Setting $n = 2m+\Delta-1$ and $k = m+\Delta-1$ we get
\begin{equation}
I_{1/2}(m+\Delta,m)
\leq
\frac{1}{2} \exp\left( - \frac{(\Delta+1)^2}{4(m-1)} \right)
\leq
\exp\left(-\Omega\left(\frac{\Delta^2}{m}\right)\right)
\enspace.
\end{equation}
This concludes the proof of the lemma.
\end{proof}

Combining the above lemmas proves the second part of Theorem~\ref{thm:consensus}.

\section{Boolean Gates}\label{sec:boolean_gate}

In terms of circuit design, the A-B protocol can be viewed
  as a differential signal amplifier.
Differential signaling has applications in systems
  that require high resilience to noise, and thus an application
  for our inherently growing systems is natural.

In this section we study a protocol that allows to
  compute the logical \NAND\ of two signals,
  however with a loss of signal quality at the output.
The A-B protocol is then applied to regenerate the signal, obtaining a clear $0$ or $1$ with high probability.
Note that the \NAND\ gate protocol is easily generalized to arbitrary two-input Boolean functions, and so is its analysis.

We start with some notation.
A \emph{signal} is from a finite alphabet $\Sigma = \{X, Y, \dots\}$.
At each time $t \geq 0$, a signal $X \in \Sigma$ has a value $x(t) \in \{0,1,\bot\}$.
Following a technique from clockless circuit design \cite{spars2002principles,myers2001asynchronous} we encode
  the value of a signal as a dual-rail signal in the following way.
For each signal $X$, there are two species $X^0$ and $X^1$.
Intuitively, for $v \in \{0,1\}$, a large count of
  $X^v(t)$ and a low count of $X^{\neg v}(t)$ encodes for $x(t) = v$.
In fact, we will ask for a minimum gap in species counts between $X^{v}(t)$ and $X^{\neg v}(t)$.
If the signal is neither $0$ nor $1$, we will say that it has value $\bot$.
We will make the assumptions on the input signals precise in the sequel, and discuss guarantees
  on output signals when specifying the gate input/output behavior.

Let $X^0,X^1$ be species of a dual-rail encoding of
  signal $X$.
For convenience we write $X(t)$ for $X^0(t) + X^1(t)$.
For $n, \Delta \in \IN$, we say signal $X$ is \emph{initially $(n, \Delta)$-correct with value
  $x \in \{0,1\}$} if
\begin{align}
X(0) \geq n \enspace
\quad \text{ and } \quad
X^{\neg x}(0) \leq \frac{n-\Delta}{2}\enspace.
\label{eq:gatecorrect:assumption}
\end{align}
The \emph{initial gap} $X^{x}(0) - X^{\neg x}(0)$ of signal $X$ is thus bounded by
\begin{align}
X^{x}(0) - X^{\neg x}(0)=
    X^{x}(0) + X^{\neg x}(0) - 2X^{\neg x}(0) \geq \Delta\enspace.
\end{align}

\subsection{Dual-Rail \NAND\ Gate}
A dual-rail implementation of a \NAND\ gate with input signals $A,B$ and output signal $Y$
  is as a protocol with input species $\mathcal{I} = \{A^0, A^1, B^0, B^1\}$, output species
  $\mathcal{O} = \{Y^0, Y^1\}$, and no internal species.
Initial counts for outputs that are not inputs are $Y^0(0) = Y^1(0) = 0$.
Further, for all $a,b \in \{0,1\}$ and $y = \neg (a \wedge b)$, the protocol contains a reaction
\begin{equation*}
\ch{$A^a$ + $B^b$ ->[$\alpha$] $A^a$ + $B^b$ + $Y^y$}\enspace,
\end{equation*}
where $\alpha > 0$ is the gate's rate constant.
Since all species are permanently replicating, we further have
  the obligatory duplication reactions
  \ch{$A^i$ ->[$\gamma$] $2A^i$}, 
  \ch{$B^i$ ->[$\gamma$] $2B^i$}, and 
  \ch{$Y^i$ ->[$\gamma$] $2Y^i$}
  for $i\in\{0,1\}$.
\figref{fig:gate} depicts the \NAND\ gate with the subsequent amplification protocol.

\begin{figure}[h]
\centering
\tikzstyle{branch}=[fill,shape=circle,minimum size=3pt,inner sep=0pt]
\begin{tikzpicture}[%
  scale=1.5, every node/.style={scale=1.5}, 
  ampstyle/.style={draw=gray,dashed,rectangle,inner xsep=5pt, inner ysep=8pt}
]

\node[nand gate US, scale=1.6, draw, rotate=0] at (0,0)
  (nor2) {};
\node[ampstyle] at ($(nor2) + (2,0)$) 
  (amp) {\footnotesize amplify};

\coordinate (o0) at ($ (nor2.output) + (0, 0.05) $);
\coordinate (o1) at ($ (nor2.output) + (0,-0.05) $);
  
\draw (o0) -- (o0-|amp.west) node[midway, above, yshift=-0.5pt]
  {\scriptsize $Y^0$};
\draw (o1) -- (o1-|amp.west) node[midway, below, yshift=0.5pt]
  {\scriptsize $Y^1$};  
  
\coordinate (a0) at ($ (nor2.input 1) + (0, 0.2) $);
\coordinate (a1) at ($ (nor2.input 1) + (0, 0.1) $);  
  
\draw (a0) -- ++(-0.5,0) node[midway, left, xshift=-4pt, yshift=3pt]
  {\scriptsize $A^0$};
\draw (a1) -- ++(-0.5,0) node[midway, left, xshift=-4pt, yshift=-1pt]
  {\scriptsize $A^1$};  
  
\coordinate (b0) at ($ (nor2.input 2) + (0,-0.1) $);
\coordinate (b1) at ($ (nor2.input 2) + (0,-0.2) $);  
  
\draw (b0) -- ++(-0.5,0) node[midway, left, xshift=-4pt, yshift=3pt]
  {\scriptsize $B^0$};
\draw (b1) -- ++(-0.5,0) node[midway, left, xshift=-4pt, yshift=-1pt]
  {\scriptsize $B^1$};  
  
\coordinate (am0) at ($ (amp.east) + (0, 0.05) $);
\coordinate (am1) at ($ (amp.east) + (0,-0.05) $);  

\draw[gray] (am0) -- ++(0.5,0) node[midway, right, xshift=6pt, yshift=3pt]
  {\scriptsize $Z^0$};
\draw[gray] (am1) -- ++(0.5,0) node[midway, right, xshift=6pt, yshift=-1pt]
  {\scriptsize $Z^1$};  

\end{tikzpicture}
\caption{Dual-rail \NAND\ gate with input signals $A$ and $B$ an output signal $Y$. Successive amplification of $Y$ to signal $Z$ shown in gray.}
\label{fig:gate}
\end{figure}
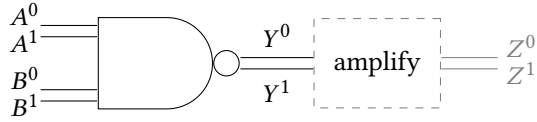

In Section~\ref{sec:gate_correct} we will show that the \NAND\ gate ensures the following input-output specification:

\begin{thm}\label{thm:gate}
Assume that the \NAND\ gate's input signals $A, B$
  are dual-rail encoded signals,
  and that they are
  initially $(n,\Delta)$-correct with values $a,b \in \{0,1\}$, respectively, where $n \in \IN^+$ and
  $\Delta \geq 0.62 \cdot \max\left\{A(0),B(0)\right\}$.
Then with high probability, there exists some time $t=O(1)$ such that $Y(t) = n$ and
$Y^y(t) - Y^{\neg y}(t) = \Omega(n)$ for the output signal~$Y$ where~$y = \neg (a \wedge b)$ is the correct \NAND\ output based on the initial values $a,b$ of signals~$A$ and~$B$, respectively.
\end{thm}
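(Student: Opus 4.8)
The plan is to exploit that the four \NAND\ production reactions are catalytic in the inputs, so the species $A^0,A^1,B^0,B^1$ evolve as four \emph{independent pure-birth (Yule) processes that are completely decoupled from} $Y$. First I would record that the total rate at which $Y$-molecules are created by input reactions is $\alpha\bigl(A^0+A^1\bigr)\bigl(B^0+B^1\bigr)=\alpha A(t)B(t)$, and that, since both inputs start with at least $n$ members and only grow, this rate is bounded below by $\alpha n^2$ throughout. Because $Y(t)$ increases by exactly one at each production or duplication event, it passes through every integer, so there is a well-defined first time $\tau$ with $Y(\tau)=n$. Comparing the $Y$-creation process with a Poisson process of rate $\alpha n^2$ shows $\tau=O(1)$ (indeed $\tau=o(1)$) with high probability. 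Over such a short window the Yule inputs grow only by a factor $e^{\gamma\tau}=1+o(1)$, so with high probability every input count stays within a $(1\pm o(1))$ factor of its initial value on $[0,\tau]$, which lets me treat the inputs as essentially frozen.

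The second ingredient is the per-production bias. Conditioned on the current input counts, a newly produced molecule is the correct species $Y^{y}$ with probability $p_y=R_y/(R_y+R_{\neg y})$, where $R_y,R_{\neg y}$ are the summed propensities of the reactions producing $Y^{y}$ and $Y^{\neg y}$, and $R_y+R_{\neg y}=\alpha A(t)B(t)$ in every case. I would go through the four choices of $(a,b)$. Writing $u=(A^{a}-A^{\neg a})/A$ and $v=(B^{b}-B^{\neg b})/B$ for the normalized input gaps, the binding case is $a=b=1$: there $Y^{0}=Y^{y}$ is produced only by the pair $A^1+B^1$, whereas $Y^{1}$ is fed by the three remaining pairs, two of which are majority–minority products. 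An elementary computation gives $2p_y-1=\tfrac12\bigl((1+u)(1+v)-2\bigr)$. In each of the other three cases the incorrect output is produced by a \emph{single} input pair involving a minority species, so $R_{\neg y}$ is far smaller and $p_y$ is even closer to $1$. By $(n,\Delta)$-correctness we have $u,v\ge \Delta/\max\{A(0),B(0)\}\ge 0.62$, whence $2p_y-1\ge\tfrac12\bigl(1.62^2-2\bigr)>0.31$ uniformly over the window; this is exactly what the hypothesis on $\Delta$ buys.

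Finally I would convert this uniform bias into a high-probability gap. Duplications of $Y$ are negligible on $[0,\tau]$: while $Y<n$ their total rate is at most $\gamma n$, dwarfed by the production rate $\ge\alpha n^2$, so the expected number of duplication events among the first $n$ increments is $O(1)$. Hence, up to an $O(1)$ correction, the first $n$ $Y$-molecules come from input reactions, each being the wrong species $Y^{\neg y}$ with probability at most $1-p^\ast<0.35$ for the fixed bound $p^\ast$ obtained above. Stochastically dominating $Y^{\neg y}(\tau)$ by a binomial with this parameter and applying a Chernoff bound yields $Y^{\neg y}(\tau)\le 0.35\,n$ with probability $1-e^{-\Omega(n)}$, so the gap satisfies $Y^{y}(\tau)-Y^{\neg y}(\tau)=n-2Y^{\neg y}(\tau)\ge 0.3\,n=\Omega(n)$. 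Combined with $\tau=O(1)$, taking $t=\tau$ gives the claim.

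The step I expect to be the \emph{main obstacle} is making this concentration argument fully rigorous, because the success probability $p_y$ drifts with the random input trajectory and the window length $\tau$ is itself random. The clean route is to condition on the input Yule processes, use the uniform bound $p_y\ge p^\ast$ that holds with high probability on $[0,\tau]$, couple the genuine sequence of species choices with an i.i.d.\ sequence so that Chernoff applies, and separately bound the rare $Y$-duplications so that they cannot erode the $\Omega(n)$ gap.
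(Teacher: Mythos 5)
Your proposal is sound, but it reaches the theorem by a genuinely different route than the paper. The paper's proof first shows (Lemma~\ref{lem:gatecorrect}, built on Lemmas~\ref{lem:rate:decreasing}--\ref{lem:prob:X}) that the input majorities $A^a(t)/A(t)$ and $B^b(t)/B(t)$ stay above $3/4$ \emph{for all time}, using the beta-distributed limit of Yule-process ratios and bounds on $I_{3/4}$; this is exactly where the threshold $0.62>\tfrac12(\sqrt5-1)$ enters, via the requirement $\mu^2/(1-\mu)>1$ in the final exponent. It then feeds this all-time guarantee into Lemma~\ref{lem:inputsoutputs_gate}, which uses a Hoeffding bound on a $\pm1$ random walk with bias $p=(3/4)^2$ (Lemma~\ref{lem:Delta_adaptedY}) and a negative-binomial estimate to show that $Y$-duplications contribute only $O(\sqrt{n\log n})$. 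You instead exploit that the gate needs only time $O(1/(\alpha n))$ to emit $n$ outputs, during which the four input Yule processes undergo only $O(\log n)$ birth events with high probability, so the ratios are effectively frozen; you then get the per-production bias $p_y\geq\bigl(\tfrac{1+u}{2}\bigr)\bigl(\tfrac{1+v}{2}\bigr)\geq(0.81)^2$ directly from $(n,\Delta)$-correctness and finish with a Chernoff bound, treating duplications via the jump-chain probability $\gamma/(\alpha n)$ per increment. This avoids the entire incomplete-beta machinery and, notably, uses the constant $0.62$ only to ensure $(1+u)(1+v)>2$ in the worst case $a=b=1$, so your argument would in fact tolerate a smaller constant; what it gives up is the paper's reusable guarantee that the inputs remain valid for all time (relevant when the downstream amplifier runs on the same species), and the sharing of lemmas with the majority-consensus analysis. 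The two gaps you flag yourself --- the random window length and the drifting $p_y$ --- are real but routine: dominate $\tau$ by a deterministic $t_0=\Theta(1/(\alpha n))$ on a high-probability event, bound the input growth on $[0,t_0]$, and condition on the input trajectories so the species choices at production events become independent Bernoulli variables that can be coupled below by i.i.d.\ Bernoulli$(p^\ast)$.
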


\subsection{Gate Correctness and Performance}
\label{sec:gate_correct}

We now turn to the proof of Theorem~\ref{thm:gate}.
For our analysis we need a bound on the regularized incomplete beta function~$I_{3/4}$.
\opt{append}{For space reasons, we postponed its proof to the appendix.}

\begin{lemrep}\label{lem:rate:decreasing}
For $X \geq Y$, it is
$I_{3/4}(X,Y) \leq \frac{1}{2} \exp\bigg(-\frac{(X-Y+1)^2}{4(Y-1)} + (X+Y-1)\log\frac{3}{2}\bigg)$.
In particular, for $m,\Delta \geq 0$,
\begin{align*}
I_{3/4}(m+\Delta,m)
\leq
\frac{1}{2}
\exp\left(-\frac{(\Delta+1)^2}{4(m-1)} + \frac{2m+\Delta}{2}\right)\enspace.
\end{align*}
\end{lemrep}
\begin{proof}
Instantiating \eqref{eq:Iz_binom} with $z=\frac{3}{4}$, $a= X$ and $b=Y$, we have
\begin{align*}
I_{3/4}(X,Y) & = \sum^{X-1}_{j=0} \binom{X+Y-1}{j} \bigg(\frac{3}{4}\bigg)^{X+Y-1-j}\bigg(\frac{1}{4}\bigg)^j\\
&=  \sum^{X-1}_{j=0} \binom{X+Y-1}{j} \bigg(\frac{1}{4}\bigg)^{X+Y-1} 3^{X+Y-1-j}\\
&= \bigg(\frac{1}{4}\bigg)^{X+Y-1}  \sum^{X-1}_{j=0} \binom{X+Y-1}{j}3^{X+Y-1-j}\\
&\leq 3^{X+Y-1} \bigg(\frac{1}{4}\bigg)^{X+Y-1}  \sum^{X-1}_{j=0} \binom{X+Y-1}{j}\\
\intertext{and from \eqref{eq:binom_exp} with $n=X+Y-1$ and $k=X-1$,}
&\leq  3^{X+Y-1} \bigg(\frac{1}{4}\bigg)^{X+Y-1}  \exp\bigg(-\frac{(X-Y+1)^2}{4(Y-1)}\bigg)\\
&\leq \frac{1}{2} \bigg(\frac{3}{2}\bigg)^{X+Y-1} \exp\bigg(-\frac{(X-Y+1)^2}{4(Y-1)}\bigg)\\
&=  \frac{1}{2} \exp\bigg(-\frac{(X-Y+1)^2}{4(Y-1)} + (X+Y-1)\log\frac{3}{2}\bigg)\enspace.
\end{align*}
The lemma's second inequality follows from
setting $X=m+\Delta$ and $Y=m$ in the above inequality.
Assuming that $m,\Delta \geq 0$, and noting that $\log\frac{3}{2} < \frac{1}{2}$,
  we obtain by algebraic manipulation
\begin{align}
-\frac{(X-Y+1)^2}{4(Y-1)} + (X+Y-1)\log\frac{3}{2}
&\leq
-\frac{(\Delta+1)^2}{4(m-1)} +  \frac{2m+\Delta}{2}\enspace;
\end{align}
from which the lemma follows.
\end{proof}

The following lemma shows that for $z=3/4$,
  the function $(x,y) \mapsto I_z(x,y)$ is non-decreasing in $(x,y)$ along
  the discretized line with slope $1/3$.
\opt{append}{Its proof is postponed to the appendix.}

\begin{lemrep}\label{lem:beta:increasing}
If $X \geq 3Y \geq 0$, then
$I_{3/4}(X,Y) \leq I_{3/4}(X+3,Y+1)$.
\end{lemrep}
\begin{proof}
We use the two identities,
\begin{align}
I_z(\alpha+1,\beta)
&=
I_z(\alpha,\beta) - \frac{z^\alpha(1-z)^\beta \Gamma(\alpha+\beta)}{\Gamma(\alpha+1)\Gamma(\beta)}\label{eq:beta:rec:a}\\
I_z(\alpha,\beta+1)
&=
I_z(\alpha,\beta) + \frac{z^\alpha(1-z)^\beta \Gamma(\alpha+\beta)}{\Gamma(\alpha)\Gamma(\beta+1)}\label{eq:beta:rec:b}
\enspace.
\end{align}
We have,
\begin{align*}
I_{3/4}(X+3,Y+1) - I_{3/4}(X,Y) &=  I_{3/4}(X+3,Y+1) - I_{3/4}(X+2,Y+1)\\
                                & \quad + I_{3/4}(X+2,Y+1) - I_{3/4}(X+1,Y+1)\\
                                & \quad  + I_{3/4}(X+1,Y+1) - I_{3/4}(X,Y+1)\\
                                & \quad  + I_{3/4}(X,Y+1) - I_{3/4}(X,Y)\enspace.
\end{align*}
Invoking~\eqref{eq:beta:rec:a} with $\alpha=X+2$ and $\beta=Y+1$ for the first term,
$\alpha=X+1$ and $\beta=Y+1$ for the second, and $\alpha=X$ and $\beta=Y+1$ for the third, as well as~\eqref{eq:beta:rec:b} with $\alpha=X$, $\beta=Y$ for the last term yields
\begin{align*}
I_{3/4}(X+3,Y+1) - I_{3/4}(X,Y)
& = -\frac{\left(\frac{3}{4}\right)^{X+2}\left(\frac{1}{4}\right)^{Y+1}\Gamma(X+Y+3)}{\Gamma(X+3)\Gamma(Y+1)}
    -\frac{\left(\frac{3}{4}\right)^{X+1}\left(\frac{1}{4}\right)^{Y+1}\Gamma(X+Y+2)}{\Gamma(X+2)\Gamma(Y+1)}\\
&\quad -\frac{\left(\frac{3}{4}\right)^{X}\left(\frac{1}{4}\right)^{Y+1}\Gamma(X+Y+1)}{\Gamma(X+1)\Gamma(Y+1)}
     +\frac{\left(\frac{3}{4}\right)^{X}\left(\frac{1}{4}\right)^{Y}\Gamma(X+Y)}{\Gamma(X)\Gamma(Y+1)}\\
&= \frac{\left(\frac{3}{4}\right)^X\left(\frac{1}{4}\right)^Y\Gamma(X+Y)}{\Gamma(X+3)\Gamma(Y+1)}\cdot \Bigg( X(X+1)(X+2)
 - \left(\frac{3}{4}\right)^2\frac{1}{4}(X+Y)(X+Y+1)(X+Y+2)\\
&\quad\quad - \frac{3}{4}\cdot\frac{1}{4}(X+Y)(X+Y+1)(X+2)
 - \frac{1}{4}(X+Y)(X+1)(X+2) \Bigg)\\
&\stackrel{Y \le X/3}{\geq} \frac{\left(\frac{3}{4}\right)^X\left(\frac{1}{4}\right)^Y\Gamma(X+Y)}{\Gamma(X+3)\Gamma(Y+1)} \cdot \frac{1}{24} (8X + 11) X\enspace,
\end{align*}
which is nonnegative since $X \geq 0$.
\end{proof}

We are now in the position to show a lower bound on the
  probability for a discrete time Yule process with two species $X$ and $Y$,
  that $\displaystyle\lim_{k \to \infty } X_k/(X_k+Y_k) < 3/4$,
  given that the initial values fulfill $X_0/(X_0+Y_0) > 3/4$ and
  that there is a step~$\ell$ with $X_\ell/(X_\ell+Y_\ell) \leq 3/4$.

\begin{lemrep}\label{lem:omega}
Let $X$ and $Y$ be species from a Yule process.
Assume that $X_0/(X_0+Y_0) > 3/4$ for the initial values.
Then
\begin{align*}
\mathbb{P}\left(\displaystyle \lim _{k\to \infty} \frac{X_k}{X_k+Y_k} < \frac{3}{4}
\,\,\Big|\,\, 
\exists \ell : \frac{X_\ell}{X_\ell + Y_\ell} \leq \frac{3}{4} \right)
\geq \omega\bigl(X_0, Y_0\bigr)
\intertext{where}
\omega\bigl(X_0, Y_0\bigr) =
\inf\left\{
I_{3/4}(x,y) \,\big|\,
x \ge X_0 \wedge y \ge Y_0 +1 \wedge x \in 3y - \{0,1,2\}
\, \right\} \enspace.
\end{align*}
Moreover, $\omega\bigl(X_0, Y_0\bigr) > 0.444$
\end{lemrep}
\begin{proof}
By assumption $X_0/(X_0+Y_0) > 3/4$.
Let $\ell \ge 1$ be minimal such that $X_\ell/(X_\ell+Y_\ell) \leq 3/4$.
By assumption such an $\ell$ exists.
By minimality of $\ell$, we have
\begin{align}
X_\ell &\leq 3Y_\ell \quad\text{ and }\quad
X_{\ell-1} > 3Y_{\ell-1}\enspace.
\end{align}
From the fact that $X,Y$ follow a Yule process, this can only
be the case if $Y$ has increased from step $\ell-1$ to $\ell$, i.e.,
\begin{align}
X_\ell &= X_{\ell-1} \geq X_0\label{eq:omega:x}\\
Y_{\ell} &= Y_{\ell-1} + 1 \geq Y_0 + 1\enspace.\label{eq:omega:y}
\end{align}
Thus, $X_\ell > 3Y_\ell-3$ from which $X_\ell \geq 3Y_\ell-3$ and further,
\begin{equation}
X_\ell \in 3Y_\ell - \{0,1,2\} \enspace.\label{eq:omega:13}
\end{equation}
For a Yule process with species $X'$ and $Y'$, and arbitrary initial counts $X'_0 = x$ and $Y'_0 = y$, we have
\begin{equation}
\mathbb{P}\left(\displaystyle \lim _{k\to \infty} \frac{X'_k}{X'_k+Y'_k}
< \frac{3}{4}\right) = I_{3/4}(x,y)\enspace.\label{eq:omega:I}
\end{equation}
The first inequality of the lemma now follows from
 \eqref{eq:omega:x}, \eqref{eq:omega:y}, \eqref{eq:omega:13}, and \eqref{eq:omega:I}.

We next show the second inequality of the lemma.
For that purpose, we remark that any $(x,y)$ in
\begin{align*}
 S= \left\{ x \ge X_0 \wedge y \ge Y_0 +1 \wedge x \in 3y - \{0,1,2\} \right\} 
\end{align*}
with $X_0 \geq 1$ and $Y_0 \geq 1$ is of the form
\begin{align}
s_0 + m \cdot (3,1) \quad \text{ where } s_0 \in \{(4,2), (5,2), (6,2)\} \text{ and }m \in \IN\enspace.\label{eq:omega:s}
\end{align}

Assume $x = 3y - \Delta$ with $\Delta \in \{0,1,2\}$.
Choosing $s_0 = (6-\Delta,2)$ and $m = y-2 \geq 0$, and applying
\eqref{eq:omega:s} yields
\begin{align*}
(6-\Delta,2) + (y-2) \cdot (3,1) = (3y-\Delta,y) = (x,y)\enspace,
\end{align*}
from which the claim follows.

By repeatedly applying Lemma~\ref{lem:beta:increasing} to an element $(x,y)$ in $S$,
  we have from \eqref{eq:omega:s} that
\begin{align}
\omega(X_0,Y_0) \geq \min\{ I_{3/4}(4,2), I_{3/4}(5,2), I_{3/4}(6,2) \} = I_{3/4}(6,2) > 0.444 \enspace.
\end{align}
The lemma follows.
\end{proof}

Making use of Lemma~\ref{lem:omega}, we next prove an upper bound on the probability that the two-species discrete-time Yule process~$X,Y$, with an initial large majority of $X$, eventually hits a step where its relative population size drops
to $\frac{3}{4}$ or below.
\opt{append}{Its proof is similar to that of Lemma~\ref{lem:collision:yule} and is postponed to the appendix.}

\begin{lemrep}\label{lem:prob:X}
Let $X$ and $Y$ be species from a Yule process.
Assume that $\frac{X_0}{X_0+Y_0} > \frac{3}{4}$.
Then
\begin{equation}
\mathbb{P}\left(\exists k :\frac{X_k}{X_k+Y_k} \leq \frac{3}{4}\right) < \frac{I_{3/4}\left(X_0,Y_0\right)}{0.444}\enspace.
\end{equation}
\end{lemrep}
\begin{proof}
By assumption $X_0 > 3Y_0$.
Further, we have
\begin{eqnarray}
I_{3/4}(X_0, Y_0)&=&\mathbb{P}\left( \displaystyle\lim_{k\to \infty} \frac{X_k}{X_k+Y_k} < \frac{3}{4} \right)\\\nonumber
&=&\mathbb{P}\left( \displaystyle \lim _{k\to \infty} \frac{X_k}{X_k+Y_k} < \frac{3}{4} \Bigm| \exists k: \frac{X_k}{X_k+Y_k} \leq\frac{3}{4}\right) \cdot \mathbb{P}\left(\exists k : \frac{X_k}{X_k+Y_k} \leq\frac{3}{4}\right) \\\nonumber
& & +\, \mathbb{P}\left(\displaystyle \lim _{k\to \infty} \frac{X_k}{X_k+Y_k} < \frac{3}{4} \wedge \forall k: \frac{X_k}{X_k+Y_k} > \frac{3}{4}\right)\\\nonumber 
 &\stackrel{\text{Lemma}~\ref{lem:omega}}{>}& 0.444 \cdot \mathbb{P}\left(\exists k :\frac{X_k}{X_k+Y_k} \leq \frac{3}{4}\right)\enspace.
\end{eqnarray}
The lemma follows.
\end{proof}

The following lemma provides a lower bound on the probability
 that the dual-rail encoding of signals $A$ and $B$, that are both initially $(n, \Delta)$-correct,
 for $\Delta > n/2$, remains separated as their species grow.
 
\begin{lem}\label{lem:gatecorrect}
Let $A^0,A^1$ as well as $B^0,B^1$ be species of a dual-rail encoding of signals $A$ and $B$.
Assume that each species follows a Yule processes.
If signals $A$ and $B$ are initially $(n, \Delta)$-correct with $n, \Delta \in \IN$ with $\Delta > \frac{n}{2}$ for some $a,b\in \{0,1\}$,
  then
\begin{align}
\mathbb{P}\left(
\forall t \ge 0 :
\frac{A^a(t)}{A(t)} > \frac{3}{4} \,\wedge\,
\frac{B^b(t)}{B(t)} > \frac{3}{4}
\right) \geq
\left(1-\frac{1}{2\cdot 0.444} \exp\bigg(
\frac{1}{2}\left(-\frac{\Delta^2}{(n-\Delta)} + \max\{A(0),B(0)\}\right)
\bigg)\right)^2\enspace.
\label{eq:gatecorrect:bound}
\end{align}
\end{lem}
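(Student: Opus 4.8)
The plan is to prove the statement one signal at a time and then combine the two bounds using independence. I would first reduce the continuous-time claim to the discrete-time Yule process of Lemma~\ref{lem:prob:X}. Fix the signal $A$ with correct value $a$ and set $X_0 = A^a(0)$ and $Y_0 = A^{\neg a}(0)$. Since each rail species only duplicates (a pure-birth process with no death reactions), the counts $A^a(t)$ and $A(t)$ are nondecreasing and the ratio $A^a(t)/A(t)$ is piecewise constant, changing only at birth events. Consequently the continuous-time event $\{\exists t\ge 0 : A^a(t)/A(t) \le 3/4\}$ takes exactly the same sequence of ratio values as, and hence coincides pathwise with, the discrete-time event $\{\exists k : X_k/(X_k+Y_k)\le 3/4\}$ on the embedded jump chain, which is precisely the two-species discrete Yule process treated in Lemma~\ref{lem:prob:X}.

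Next I would verify the hypothesis $X_0/(X_0+Y_0) > 3/4$ of Lemma~\ref{lem:prob:X}. From $(n,\Delta)$-correctness~\eqref{eq:gatecorrect:assumption}, namely $A(0)\ge n$ and $A^{\neg a}(0)\le (n-\Delta)/2$, we get
\[
\frac{A^a(0)}{A(0)} = 1 - \frac{A^{\neg a}(0)}{A(0)} \ge 1 - \frac{n-\Delta}{2n} = \frac{n+\Delta}{2n} > \frac34,
\]
where the last inequality is equivalent to the assumed $\Delta > n/2$. Lemma~\ref{lem:prob:X} then yields $\mathbb{P}(\exists t: A^a(t)/A(t)\le 3/4) < I_{3/4}(X_0,Y_0)/0.444$.

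The quantitative heart of the argument is to bound $I_{3/4}(X_0,Y_0)$ via Lemma~\ref{lem:rate:decreasing}. Writing $m = Y_0$ and using the gap $X_0 - Y_0$, so that $2m + (X_0-Y_0) = A(0)$ exactly, the second inequality of Lemma~\ref{lem:rate:decreasing} gives a bound of the form $\tfrac12\exp\bigl(-\tfrac{(X_0-Y_0+1)^2}{4(Y_0-1)} + \tfrac{A(0)}{2}\bigr)$. I would then replace the actual initial counts by their worst-case values from $(n,\Delta)$-correctness: the gap satisfies $X_0-Y_0\ge \Delta$ and the minority satisfies $Y_0\le (n-\Delta)/2$, so the negative term obeys $-\tfrac{(X_0-Y_0+1)^2}{4(Y_0-1)} \le -\tfrac{(\Delta+1)^2}{2(n-\Delta)} \le -\tfrac{\Delta^2}{2(n-\Delta)}$. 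This gives $I_{3/4}(X_0,Y_0) \le \tfrac12\exp\bigl(\tfrac12(-\tfrac{\Delta^2}{n-\Delta} + A(0))\bigr)$ and hence $\mathbb{P}(\forall t: A^a(t)/A(t) > 3/4) \ge 1 - \tfrac{1}{2\cdot 0.444}\exp\bigl(\tfrac12(-\tfrac{\Delta^2}{n-\Delta}+A(0))\bigr)$.

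Finally I would run the identical argument for signal $B$, obtaining the same bound with $B(0)$ in place of $A(0)$, and use $A(0),B(0)\le \max\{A(0),B(0)\}$ so that both single-signal probabilities are at least the common lower bound $L := 1 - \tfrac{1}{2\cdot 0.444}\exp\bigl(\tfrac12(-\tfrac{\Delta^2}{n-\Delta}+\max\{A(0),B(0)\})\bigr)$. Since the $A$-rail species and the $B$-rail species evolve as mutually independent Yule processes, the two ``stays above $3/4$'' events are independent, and multiplying the two lower bounds gives the claimed $L^2$. The step I expect to require the most care is the monotonicity bookkeeping when passing to worst-case initial counts above: one must check that enlarging the gap and shrinking the minority both push the $I_{3/4}$ bound downward while the identity $2m+(X_0-Y_0)=A(0)$ keeps the additive $A(0)/2$ term exact. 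A minor side condition is that squaring is a legitimate lower bound only when $L\ge 0$, which holds in the regime of interest where $\Delta^2/(n-\Delta)$ dominates $\max\{A(0),B(0)\}$.
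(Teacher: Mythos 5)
Your proposal is correct and follows essentially the same route as the paper's proof: factor by independence of the two Yule processes, verify $A^a(0)/A(0)\geq (n+\Delta)/(2n)>3/4$ from $(n,\Delta)$-correctness, apply Lemma~\ref{lem:prob:X} and then Lemma~\ref{lem:rate:decreasing}, and bound the exponent by the worst-case gap $\Delta$ and minority count $(n-\Delta)/2$. The extra remarks you add (the continuous-to-discrete reduction and the $L\ge 0$ caveat for squaring) are sensible but do not change the argument.
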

\begin{proof}
By Independence of the two Yule processes, we have 
\begin{align}
\mathbb{P}\Biggl(
\forall t \ge 0 :
\frac{A^a(t)}{A(t)} > \frac{3}{4} \,\wedge\,
\frac{B^b(t)}{B(t)} > \frac{3}{4}
\Biggr) &= 
\mathbb{P}\Biggl(
\forall t \ge 0 : \frac{A^a(t)}{A(t)} > \frac{3}{4}
\Biggr) \cdot
\mathbb{P}\Biggl(
\forall t \ge 0 : \frac{B^b(t)}{B(t)} > \frac{3}{4}
\Biggr)\enspace.\label{eq:gatecorrect:34}
\end{align}
Further, since $A$ is $(n,\Delta)$-correct with $\Delta > \frac{n}{2}$,
\begin{align*}
A(0) &= 2A^a(0) - ( A^{a}(0) - A^{\neg a}(0)) \quad\Rightarrow\\
A^a(0) &\geq \frac{A(0) + \Delta}{2} \quad\Rightarrow\\
\frac{A^a(0)}{A(0)} &\geq \frac{n+\Delta}{2n} > \frac{3}{4}\enspace.
\end{align*}
By analogous arguments, $\frac{B^b(0)}{B(0)} > \frac{3}{4}$.
We may thus apply Lemma~\ref{lem:prob:X} twice to \eqref{eq:gatecorrect:34}, obtaining
\begin{align*}
\mathbb{P}\Biggl(
\forall t \ge 0 :
\frac{A^a(t)}{A(t)} > \frac{3}{4} \,\wedge\,
\frac{B^b(t)}{B(t)} > \frac{3}{4}
\Biggr)
&>
\Biggl(1- \frac{I_{3/4}\bigl(A^a(0),A^{\neg a}(0)\bigr)}{0.444}\Biggr)\cdot
\Biggl(1- \frac{I_{3/4}\bigl(B^b(0),B^{\neg b}(0)\bigr)}{0.444}\Biggr)\enspace.\label{eq:gatecorrect:final}
\end{align*}
We can now apply Lemma~\ref{lem:rate:decreasing} twice: for $X=A^a(0)$ and $Y=A^{\neg a}(0)$, and for $X=B^b(0)$ and $Y=B^{\neg b}(0)$.
For $A^a$ and $A^{\neg a}$, we further have
\begin{align*}
-\frac{\left(A^{a}(0)-A^{\neg a }(0)+1\right)^2}{4(A^{\neg a}(0)-1)} + \frac{A(0)}{2}
\leq
-\frac{\left(A^{a}(0)-A^{\neg a }(0)\right)^2}{4A^{\neg a}(0)} + \frac{A(0)}{2}
\stackrel{\eqref{eq:gatecorrect:assumption}}{\leq}
-\frac{\Delta^2}{4\frac{n-\Delta}{2}} + \frac{A(0)}{2}
=
\frac{1}{2}\left(-\frac{\Delta^2}{(n-\Delta)} + A(0)\right)\enspace.
\end{align*}
By analogous arguments for $B^b$ and $B^{\neg b}$, 
  the bound in \eqref{eq:gatecorrect:bound} follows.
\end{proof}

We next show in Lemma~\ref{lem:Delta_adaptedY} that when the \NAND\ gates has produced $n$ output species $Y^0$ and $Y^1$, a certain gap $\Delta > 0$ is guaranteed
  with a probability that depends on $n$ and $\Delta$.
However, instead of showing this for the original \NAND\ gate, we first prove that the bound holds for an adapted version where $Y^0$ and $Y^1$ do not duplicate.
We later extend the result to the original \NAND\ gate in Lemma~\ref{lem:inputsoutputs_gate}.
\opt{append}{The proofs of Lemmas~\ref{lem:Delta_adaptedY} and \ref{lem:inputsoutputs_gate} are given in the appendix.}

\begin{lemrep}\label{lem:Delta_adaptedY}
Consider an adapted version of the \NAND\ gate with dual-rail encoded input signals $A,B$ and output signal $Y$.
In the adapted version, species $Y^0$ and $Y^1$ do not duplicate.
Further, assume that for some $a,b \in \{0,1\}$,
\begin{align}
\forall t \ge 0 :
\frac{A^a(t)}{A(t)} > \frac{3}{4} \,\wedge\,
\frac{B^b(t)}{B(t)} > \frac{3}{4}\enspace.
\end{align}
Then, with $y = \neg(a \wedge b)$ being the correct Boolean output
  of the gate, for any $t \geq 0$ and $\Delta,n \in \IN$ with $\Delta \leq n/8$,
\begin{align}
 \mathbb{P}\left(Y^y(t) - Y^{\neg y}(t) > \Delta \bigm| Y(t) = n\right) \geq
 1 -
 \exp\left(-\frac{\left(\frac{n}{8}-\Delta\right)^2}{2n}\right)\notag\enspace.
\end{align}
\end{lemrep}
\begin{proof}
From the assumption on the inputs, we have that the probability of the \NAND\ gate to chose species $A^a$ and $B^b$ when producing an output species,
 is at least $p = \left(\frac{3}{4}\right)^2$.
Likewise a wrong output is produced with probability at most $1-p$.

Consider the discrete random walk on $\IZ$, starting at position $D_0 = 0$, and at step $i \geq 1$, incrementing $D_{i-1}$ by one with probability $p$, and decrementing by one with probability $1-p$.
It is easy to construct a coupling such that $D_n \leq Y^y(t) - Y^{\neg y}(t)$, given that $Y(t) = n$.

Let $I_i$, $i \geq 1$, be a
sequence of i.i.d.\ Bernoulli trials with success probability $p$,
and $R_n = \sum_{i=1}^n I_i$.
Then $R_n$ follows a Binomial distribution and $2R_n-n$ is identically distributed to $D_n$.
Thus,
\begin{align}
\mathbb{P}\left(D_n > \Delta\right) & =
1 - \mathbb{P}\left(R_n \leq \frac{\Delta+n}{2}\right)
= 1 - \sum_{i=0}^{\frac{\Delta+n}{2}} {n \choose i}\, p^i(1-p)^{n-i}\enspace.
\end{align}
Applying Hoeffding's inequality \cite{hoeffding1994probability} for sums of Bernoulli trials, we obtain for
  $\Delta \leq (2p-1)n = \frac{n}{8}$,
\begin{align}
\sum_{i=0}^{k} {n \choose i}\, p^i(1-p)^{n-i} &\leq
\exp\left(-2\frac{\left(np-k\right)^2}{n}\right)
\end{align}
where $k = \frac{\Delta+n}{2}$.
Thus,
\begin{align}
\mathbb{P}\left(D_n > \Delta\right)
&\geq 1 - \exp\left(-2\frac{\left(np-\frac{\Delta+n}{2}\right)^2}{n}\right)
= 1 - \exp\left(-\frac{\left(\frac{n}{8}-\Delta\right)^2}{2n}\right)\enspace.
\end{align}
The lemma follows.
\end{proof}

\begin{lemrep}\label{lem:inputsoutputs_gate}
Consider the \NAND\ gate with dual-rail encoded input signals $A,B$ and output signal $Y$.
If for some $a,b \in \{0,1\}$,
\begin{align}
\forall t \ge 0 :
\frac{A^a(t)}{A(t)} > \frac{3}{4} \,\wedge\,
\frac{B^b(t)}{B(t)} > \frac{3}{4}\enspace,
\end{align}
$A(0) \geq n$, and $B(0) \geq n$
then, letting $y = \neg (a \wedge b)$ be the correct Boolean output
  of the gate, with high probability there exists a $t = O(1)$ such that
 $Y^y(t) - Y^{\neg y}(t) = \Omega(n)$ and
 $Y(t) = n$.
\end{lemrep}
\begin{proof}
Consider the variant of the \NAND\ gate from Lemma~\ref{lem:Delta_adaptedY} where $Y^0$ and $Y^1$ do not duplicate.
Let $T > 0$ be the earliest time $t$ when $Y(t) = n$.

By assumption, for all $t'\geq 0$, $A(t') \geq n$ and $B(t') \geq n$.
Thus the gate's production rate of $Y$ species is at least $n^2\alpha$.
It follows that with high probability $T \leq \frac{\log n}{\alpha n}$.

We will next upper bound the count of species~$Y$  that would have been produced if duplication were in place during time $[0,T]$.
For that purpose, assume that all $Y$ species generated
  by the gate during $[0,T]$ are already produced at time~$0$.
Then, the count of species~$Y$ generated by duplication,
  let us call them $\hat{Y}$, follows a single species Yule process with initial count $\hat{Y}(0) = n$.
Thus, $\hat{Y}(T)$ follows a negative binomial distribution with
 success probability $p = 1 - e^{-\gamma T}$ and $r = \hat{Y}(0)$, i.e.,
\begin{align}
\IP\left(\hat{Y}(T) = k \right) &=
{k-1 \choose \hat{Y}(0) - 1}\,
e^{-\gamma T \hat{Y}(0)}
\left(1-e^{-\gamma T}\right)^{k - \hat{Y}(0)} \enspace.
\end{align}
Further, for the expected count of species
  generated by duplication, minus the initial
  $\hat{Y}(0)$ that were generated by the gate,
we have,
\begin{align}
\IE\left(\hat{Y}(T) - \hat{Y}(0)\right) =
\frac{r}{1-p}-r = 
\hat{Y}(0)(e^{\gamma T}-1) \leq
n \left(e^{\frac{\gamma}{\alpha}\frac{\log n}{n}}-1\right)
\enspace.\label{eq:E_exp}
\end{align}

We next show that,
\begin{align}
\IE\left(\hat{Y}(T) - \hat{Y}(0)\right) = O(\log n)\enspace.\label{eq:E_Olog}
\end{align}
Setting
  $g = \gamma/\alpha$, and letting
  $C = g e^g$, Equation~\eqref{eq:E_Olog} follows from the fact that for all $n >0$,
\begin{align*}
n \left(e^{\frac{g\log n}{n}}
-1\right)
&\leq 
C \log n
\quad\quad\Leftrightarrow\\
e^{\frac{g\log n}{n}}
&\leq
C \frac{\log n}{n} + 1\enspace.
\end{align*}
Substituting $z = \log n/n$,
  the latter follows from
\begin{align}
\forall z\in[0,1] \colon\quad
e^{g z}
&\leq 
C z + 1
\enspace.\label{eq:Cz}
\end{align}
Inequality \eqref{eq:Cz}, follows by observing that
  it holds for $z = 0$, and that, by taking the
  $z$-derivative on both sides, we obtain
\begin{align*}
g e^{gz} \leq C\enspace.
\end{align*}
which is true for $z \in [0,1]$ by choice of $C = g e^g$; Equation~\eqref{eq:E_Olog} follows.

Noting that the variance $\sigma^2 = \Var\left(\hat{Y}(T)-\hat{Y}(0)\right)$ of a negative binomial distribution, with $r$ and $p$ as above, is
\begin{align*}
    \sigma^2 = \frac{\IE\left(\hat{Y}(T)-\hat{Y}(0)\right)}{1-p}\enspace,
\end{align*}
and setting $\mu = \IE(\hat{Y}(T)-\hat{Y}(0))$, we next apply Chebyshev's bound $\IP\left( |X -\mu |\geq \epsilon \right)\leq \frac{\sigma^2}{\epsilon^2}$.

In particular, the fact that with high probability less than $\mu+\epsilon$ species of $Y$ are generated by duplication, follows from
\begin{align}
\IP\left(\hat{Y}(T)-\hat{Y}(0)\geq \mu + \epsilon\right) \leq \frac{\sigma^2} {\epsilon^2} \leq
1/n\enspace.\label{eq:whp_eps}
\end{align}
Solving for $\varepsilon$ gives,
\begin{align}
\frac{\sigma^2} {\epsilon^2} &\leq
1/n\quad\Leftrightarrow\quad
\epsilon \geq \sigma\sqrt{n}\quad\Leftrightarrow\quad
\epsilon \geq
\sqrt{n \IE\left(\hat{Y}(T)-\hat{Y}(0)\right) e^{\gamma T}}\enspace.
\end{align}
Further, observing that $e^{\gamma T} = e^{\frac{g\log n}{n}} = O(1)$, and using \eqref{eq:E_Olog}, we obtain the existence of a function $h(\cdot)$, such that, if we choose
\begin{align}
\epsilon \geq h(n) =
O\left(\sqrt{n\log n}\right)\enspace,
\label{eq:h}
\end{align}
Inequality~\eqref{eq:whp_eps} is fulfilled.

Thus, together with \eqref{eq:E_Olog}, one obtains that with high probability $\hat{Y}(T) - \hat{Y}(0)$ is at most
\begin{align}
    \IE\left(\hat{Y}(T) - \hat{Y}(0)\right) + \varepsilon = O\left(\sqrt{n\,\log n}\right)\enspace.
    \label{eq:whp_Yhat}
\end{align}

Applying Lemma~\ref{lem:Delta_adaptedY} for $Y(T) = n$, we obtain a bound on the gap $\Delta = Y^y(T) - Y^{\neg y}(T)$, excluding those generated by duplication, that holds with high probability.
Choosing 
\begin{align*}
\Delta =
\frac{n}{8} - \sqrt{2n\,
\log n}\enspace,
\end{align*}
we apply Lemma~\ref{lem:Delta_adaptedY} for $n$ and $\Delta \leq \frac{n}{8}$, and obtain
\begin{align}
 \mathbb{P}\left(Y^y(t) - Y^{\neg y}(t) > \Delta \bigm| Y(t) = n\right) \geq
 1 -
 \exp\left(-\frac{\left(\frac{n}{8}-\Delta\right)^2}{2n}\right)\label{eq:YyY1y}\enspace.
\end{align}
By choice of $\Delta$,
\begin{align}
\Delta &\leq
\frac{n}{8} - \sqrt{2n\,
\log n}\quad\quad\Rightarrow\\
\left(\frac{n}{8}-\Delta\right)^2
& \geq 2n\log n\quad\quad\Rightarrow\\
\exp\left(
-\frac{\left(\frac{n}{8}-\Delta\right)^2}{2n}
\right)
& \geq
\frac{1}{n}\quad\quad\Rightarrow\\
1 -  \exp\left(
-\frac{\left(\frac{n}{8}-\Delta\right)^2}{2n}
\right)
& \geq
1 - \frac{1}{n}
\end{align}
Together with \eqref{eq:YyY1y}, we have
\begin{align}
\mathbb{P}\left(Y^y(t) - Y^{\neg y}(t) > \Delta \bigm| Y(t) = n\right) \geq
 1 - \frac{1}{n}\enspace.
\end{align}
Additionally accounting for the
  $Y$ species that have been generated by duplication until time $T$, by using \eqref{eq:whp_Yhat},
  we obtain that the gap $Y^y(T) - Y^{\neg y}(T)$ between correct output species $Y^y$ and incorrect output species $Y^{\neg y}$ at time
  $T$ in a gate with duplication, with high probability, fulfills
\begin{align*}
Y^y(T) - Y^{\neg y}(T)
\geq
\Delta - \left(\hat{Y}(T) - \hat{Y}(0)\right)
= \Omega\left(n\right)\enspace.
\end{align*}
The lemma follows.
\end{proof}

We are now in the position to prove Theorem~\ref{thm:gate},
  showing the correctness of the \NAND\ gate if
  each of the two dual-rail input signals has a sufficiently
  large gap between its rails.

\begin{proof}[Proof of Theorem~\ref{thm:gate}.]
The theorem follows from Lemma~\ref{lem:inputsoutputs_gate}
  if its assumption holds with high probability.
The latter follows from Lemma~\ref{lem:gatecorrect} if
  the exponent $\frac{1}{2}\left(-\frac{\Delta^2}{(n-\Delta)} + \max\{A(0),B(0)\}\right)$ is in $\Omega\left(-\max\{A(0),B(0)\}\right)$.
We next show that this is the case.

Let $M = \max\{A(0),B(0)\}$.
From $\Delta \geq \mu M$ with $\mu = 0.62$ we have,
\begin{align*}
-\frac{\Delta^2}{n-\Delta} + M
\leq
-\frac{\mu^2 M^2}{M - \mu M} + M
\leq
M\left(1-\frac{\mu^2}{(1-\mu)}\right)\enspace.
\end{align*}
It thus remains to show that $\left(1-\frac{\mu^2}{(1-\mu)}\right) < 0$.
By algebraic manipulation, this is the case if
  $\mu \in \left(\frac{1}{2} (\sqrt{5} - 1), 1\right)$,
  which is true by assumption.
The theorem follows.
\end{proof}

\section{Simulations}\label{sec:simulations}

\subsection{A-B Protocol Simulations}

Simulations corresponding to the A-B protocol complement the theoretical results above. The A-B protocol is simulated in \figref{AB_diff}
for the probability that species A survives, while species B goes extinct. The birth and death rates, $\gamma$ and $\delta$, are both set to 1. The probability that the protocol converges on A is primarily dependent on the difference in initial population size $A_0 - B_0$. Larger populations are only slightly less sensitive to the difference: \figref{AB_diff} demonstrates that the total population size across two orders of magnitude has a small effect compared to the difference between species.
Indeed, this behavior qualitatively matches the bound in
  Theorem~\ref{thm:consensus} with $-\Omega(\Delta^2/n)$
  in the exponent.

\begin{figure}
 \centering
 \includegraphics[scale=.35]{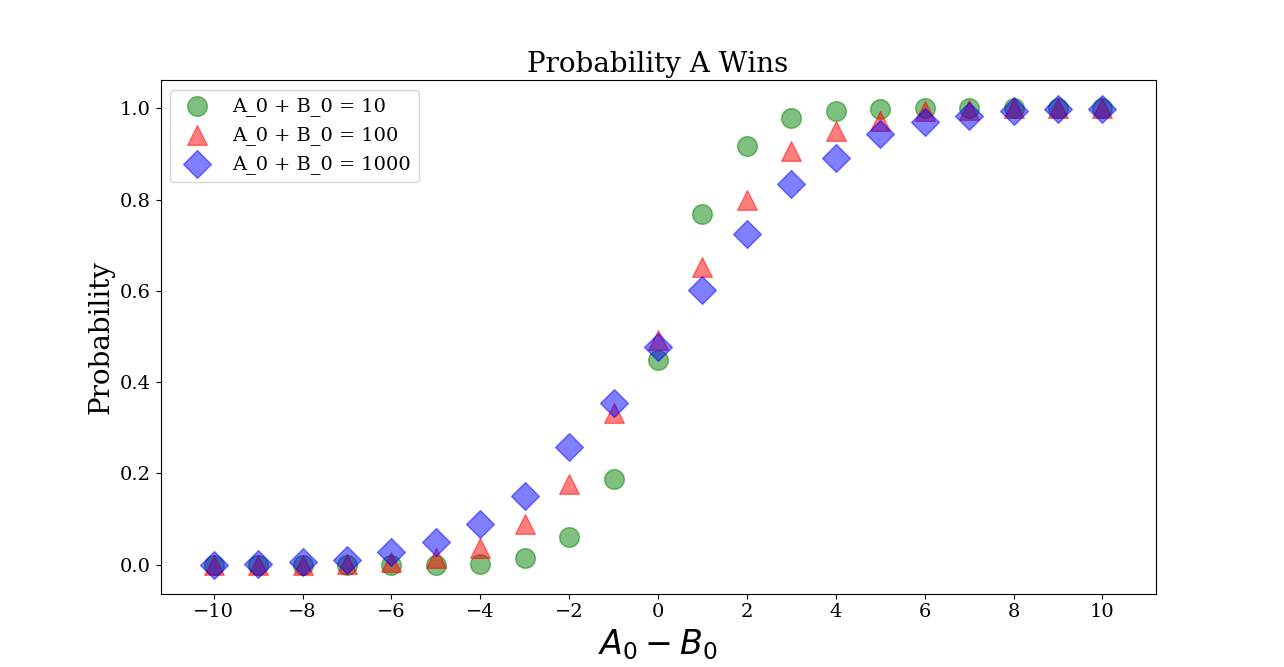}
\caption{The probability that species A survives while species B goes extinct is sharply dependent on their initial difference in population count $A_0 - B_0$. The sharpness of the transition is inversely proportional to initial population size $A_0 + B_0$.}
\label{AB_diff}
\end{figure}

The dependence of expected convergence time for the A-B protocol is explored over its reaction rate constants and initial conditions in \figref{AB_sim}.
Exponential changes in rate constants yield exponential changes in convergence time.
As expected, the convergence time is more strongly dependent on the death rate constant~$\delta$, than the birth rate constant~$\gamma$.
Convergence time sharply increases if the initial concentrations of the two species $A$ and $B$ are proportional.
The off-diagonal initial concentrations converge faster for larger population sizes since the absolute difference in concentrations is larger.

\begin{figure}
\begin{subfigure}{.5\textwidth}
  \centering
  \includegraphics[width=0.9\linewidth]{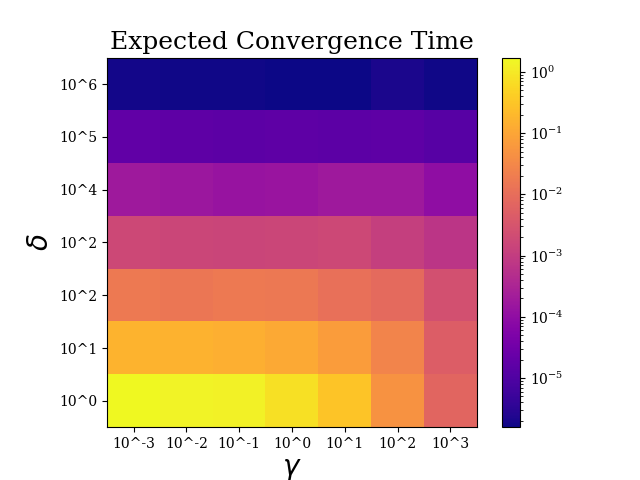}
\end{subfigure}\hfill
\begin{subfigure}{.5\textwidth}
  \centering
  \includegraphics[width=0.9\linewidth]{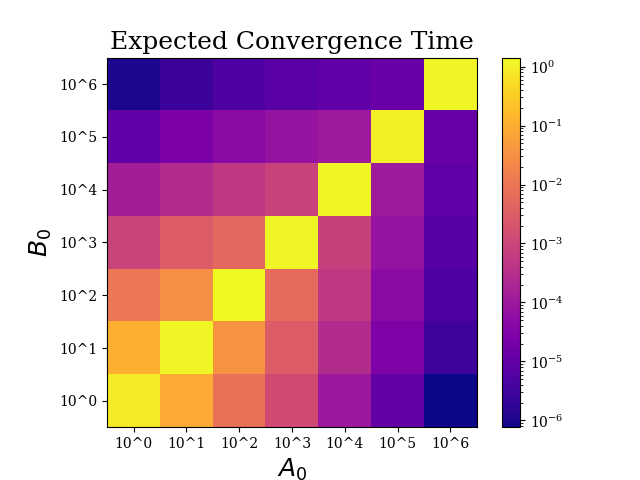}
\end{subfigure}
\caption{Log-scaled expected convergence time of the A-B protocol is represented by color. Corresponding values are shown on the adjacent vertical bar. \textbf{Left:} rate constants $\gamma$ and $\delta$ with $A_0=B_0=100$. \textbf{Right:} initial populations sizes with $\gamma=0.01$ and $\delta=1$.}
\label{AB_sim}
\end{figure}

\subsection{\emph{In silico} Biological Implementation}
While the studied model is a simplification, it represents core functions that constitute collective decision-making among biological species, and is readily adaptable for specific biological applications.
If reactions are modified such that one of the two reactants does not change, the model could represent one-way messaging equivalent to a conjugation event between a sender and receiver bacterial cell~\cite{marken2019addressable}.
Similarly, if the messages $A$ and $B$ are coded as free species diffusible between senders and receivers, it could represent communication between bacterial cells using bacteriophage particles as messages~\cite{ortiz2012engineered}.

In this section, we discuss a plausible biological implementation with \emph{E. coli} bacteria that use conjugation to communicate.
Conjugation is a method of genetic communication in which circular DNA plasmids are transferred from a sender cell to a receiver cell.
An F plasmid allows a cell to be a sender during conjugation.
The receiver can be engineered to express a logical function using the received plasmid and its existing DNA, although the internal implementation is not detailed for this simulation.
A conjugation reaction with a sender~$S$ and a receiver~$R$ is described by \ch{$R$ + $S$ ->[$\delta$] $f(R,S)$ + $S$}, where $\delta$ is the conjugation rate constant.
Both, the amplifier and the \NAND\ gate follow this scheme.
For the amplifier, $f(R,S)=\emptyset$ and for the \NAND\ gate $f(R,S)=Y$, where $Y$ is the gate's corresponding output species.
While with wild-type F plasmids, \emph{E. coli} are either senders (with F plasmid) or receivers (without F plasmid), there exist engineered systems that allow the same cell (with F plasmid) to be both a sender and a receiver~\cite{DimitriuLBMBLT14,marken2019addressable}.
Note that a single cell still cannot act as both the sender and the receiver during a single reaction.

The growth of the \emph{E. coli} is modeled by a logistic model with a carrying capacity of $10^9$ cells.
Reaction rate constants for duplication $\gamma = 0.016$ and for conjugation $\delta = 10^{-11}$ have been taken from Dimitriu \emph{et al.}~\cite{DimitriuLBMBLT14}. 
For our implementation, amplification of the gate's inputs and outputs was executed in parallel to the gate's protocol.
The simulations discussed in the following suggest that sequential execution is not required for correctness and performance, greatly simplifying the biological design.
If all possible gate reactions were used, inputs that lead to $Y^1$ would be more susceptible to noise since there are more possible input pairs leading to $Y^1$ than $Y^0$ in a \NAND\ gate.
This was alleviated by selecting a subset of all possible gate reactions in which three reactions lead to $Y^1$~(see (1)--(3) below) and two reactions lead to $Y^0$~(see (4)--(5) below). 

\begin{multicols}{2}
\begin{enumerate}
    \item \ch{$A^1+B^0$ -> $A^1+Y^0$}
    \item \ch{$A^0+B^1$ -> $A^0+Y^0$}
    \item \ch{$A^1+B^1$ -> $A^1+Y^0$}
    \item \ch{$A^0+B^0$ -> $A^0+Y^1$}
    \item \ch{$A^0+B^0$ -> $Y^1+B^0$}
\end{enumerate}
\end{multicols}

Simulation of our system for the four possible input choices are  shown in \figref{fig:NAND_sim}.
For performance with many individuals, simulations are done using the $\tau$-leaping approximation of stochastic simulation, in which multiple reactions occur during a dynamic time interval of $\tau$, before updating reaction rates~\cite{gillespie2001approximate, hoops2006copasi}. 
The initial population size is set to $5 \times 10^8$, the carrying capacity to $1 \times 10^9$, and the initial input error to $10$\% of wrong input species per input.
Despite the low rate of communication from conjugation, the correct output species rapidly out-competes the incorrect output species for all input choices.

\begin{figure}
\centering
\includegraphics[width=1\linewidth]{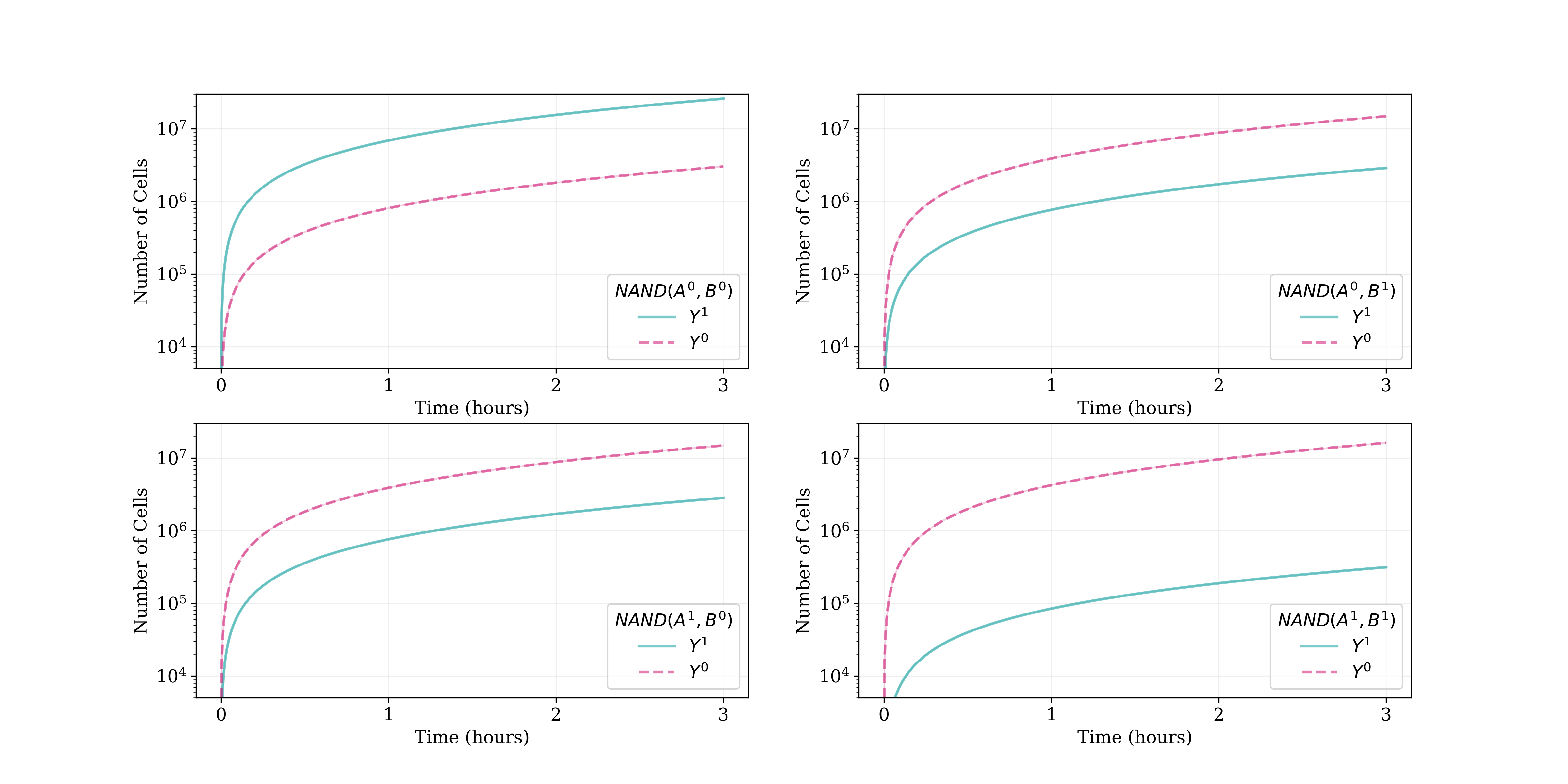}
\caption{A biologically plausible implementation of a \NAND\ gate with amplifiers on inputs and outputs.
Initial population size is $5 \times 10^8$, carrying capacity of $10^9$ cells.
Reaction rate constants were set to $\gamma= 0.016$ and $\delta = 10^{-11}$ \cite{DimitriuLBMBLT14}.
The output species is shown for each choice of inputs.
The initial input error is $10\%$.
All choices lead to correct, clearly separable outputs within half an hour.
Confidence intervals from 30 sample simulations are smaller than the width of the lines.}
\label{fig:NAND_sim}
\end{figure}

\section{Conclusions}\label{sec:conclusion}

We considered the majority consensus problem with continuous population growth in a stochastic setting, and established the A-B protocol between two competing species~$A$ and $B$ with birth reactions \ch{$A$ -> $2A$} and \ch{$B$ -> $2B$}, and death reaction \ch{$A$ + $B$ -> $\emptyset$}.
In particular, the input of the A-B protocol are two
  species~$A$ and $B$ with
  an initial total population size~$n = A(0)+B(0)$ and an initial gap~$\Delta=|A(0)-B(0)|$.
We showed that the A-B protocol reaches majority consensus with high probability if the gap weakly grows with the population size according to~$\Delta=\Omega(\sqrt{n \log n})$.
Expected convergence time until consensus is constant and
  in $O(\log n)$ with high probability.

We further demonstrated how to use dual-rail gates to implement digital circuits computing arbitrary Boolean functions.
As opposed to thresholds of a single species, dual-rail
  encoding is particularly useful in our birth systems
  as the A-B protocol allows us to amplify and thus regenerate
  such signals.
  
As a dual-rail gate implementation, we presented the \NAND\ gate protocol that takes two dual-rail encoded input signals and produces a corresponding dual-rail output signal.
The protocol is simple, an important criterion for follow up in real-world biological implementations.
We proved that, given a sufficiently large initial gap between
 the rails of the input signals, our gate produces the correct
 output with high probability in $O(\log n)$ time, where
 $n$ is a lower bound on the initial input population size.
In particular, our gate guarantees an output signal gap of $\Omega(n)$ if both inputs have a gap of at least $0.62$ times their initial population size. By alternating execution of the \NAND\ gate protocol and the A-B protocol, layer by layer, we finally arrive at computing the circuit's outputs. 

Simulations show that the qualitative behavior of our protocols matches the behavior expected from the asymptotic bounds.
While the studied A-B protocol and the \NAND\ gate protocol are simplifications of biological implementations of consensus and gate evaluation protocols,
we believe that our results give a signpost for future research on the successful implementation of complex distributed systems such as indirect inter-cellular communication via phages. 
We discussed a potential biological implementation based on communication by conjugation among engineered \emph{E. coli}.

\begin{acks}
We acknowledge support from the Digicosme working group HicDiesMeus, Ile-de-France (IdF) region's DIM-RFSI, and INRAE's MICA department.
We thank Joel Rybicki for feedback on an earlier version.
\end{acks}

\bibliographystyle{plain}
\bibliography{agents}

\end{document}